\newtheorem{theorem}{Theorem}[section]
\newtheorem{corollary}[theorem]{Corollary}
\newtheorem{lemma}[theorem]{Lemma}
\newtheorem{definition}[theorem]{Definition}
\numberwithin{equation}{section}
\theoremstyle{definition}
\begin{document}
\title[Amalgamated Products]{Amalgamated Products of Ore and Quadratic Extensions of Rings}
\author{Garrett Johnson}
\address{Department of Mathematics, North Carolina State University, Raleigh, NC 26795-8205}
\email{gwjohns3@ncsu.edu}
\subjclass[2010]{16S10, 16S36, 16S85, 16D25, 20C08}
\keywords{amalgamated products, skew-polynomial rings, Ore localization, ideals, double affine Hecke algebras}

\begin{abstract}
We study the ideal theory of amalgamated products of Ore and quadratic extensions over a base ring $R$. We prove an analogue of the Hilbert Basis theorem for an amalgamated product $Q$ of quadratic extensions and determine conditions for when the one-sided ideals of $Q$ are principal or doubly-generated. We also determine conditions that make $Q$ a principal ideal ring.  Finally, we show that the double affine Hecke algebra $\mathbb{H}_{q,t}$ associated to the general linear group $GL_2(k)$ (here, $k$ is a field with $\text{char}(k)\neq 2$) is an amalgamated product of quadratic extensions over the three-dimensional quantum torus ${\mathcal O}_{\bf q}((k^\times)^3)$ and give an explicit isomorphism. In this case, it follows that $\mathbb{H}_{q,t}$ is a noetherian ring.
\end{abstract}

\maketitle

\section{Introduction}

Since the appearance of the seminal paper of Ore \cite{Ore} in 1933, Ore extensions $R[x;\tau,\delta]$ (or \emph{skew-polynomial rings}) have played an important roll in several areas of algebra, such as the universal enveloping algebras of solvable Lie algebras, group rings of polycyclic-by-finite groups, quantized coordinate rings, and rings of differential operators. The Ore extensions of a given ring $R$ behave like an ordinary polynomial ring $R[x]$, except the coefficients do not necessarily commute past the variable $x$. The noncommutativity  is governed by a ring endomorphism $\tau:R\to R$ and a (left) $\tau$-derivation $\delta:R\to R$. In an Ore extension based on these data $xr=\tau (r)x+\delta(r)$  ($r\in R)$. 

Let $S_1=R[x;\tau_1,\delta_1]$ and $S_2=R[y;\tau_2,\delta_2]$ be Ore extensions over a base ring $R$, and let $S:=S_1*_RS_2$, the pushout (or \emph{amalgamated free product}) ring. Primeness of the base ring $R$ carries over to the amalgamated product $S$. In Section \ref{Ore Extensions} we prove

\begin{theorem} Let $\tau_1$ and $\tau_2$ be autmorphisms of a ring $R$. If $R$ is a $(\tau_1,\tau_2)$-prime ring (i.e. if the product of two $(\tau_1,\tau_2)$-stable ideals is zero, then at least one ideal is zero), then $S$ is a prime ring.\end{theorem}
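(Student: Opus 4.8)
The plan is to kill the derivations by a filtration and then run a leading-coefficient argument in the resulting derivation-free amalgamated product. First I would filter $S$ by total degree in $x$ and $y$: let $F_n$ be the left $R$-span of the alternating words of total degree $\le n$. Since $xr=\tau_1(r)x+\delta_1(r)$ and $yr=\tau_2(r)y+\delta_2(r)$ only introduce lower-order corrections, $F_\bullet$ is a separated, exhaustive ring filtration; because the $\tau_i$ are automorphisms, each $S_i$ is free on both sides over $R$ on the powers of its variable, so $S$ has the usual normal form on alternating words and one checks that $\operatorname{gr}(S)\cong T:=R[x;\tau_1]\ast_R R[y;\tau_2]$, the amalgamated product of the \emph{pure} skew-polynomial rings, graded by total degree. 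A symbol argument then reduces the statement to showing that $T$ is \emph{graded prime}: if $\bar a,\bar b$ are the leading symbols of nonzero $a,b\in S$ and $\bar s\in T$ is homogeneous with $\bar a\,\bar s\,\bar b\neq0$, then lifting $\bar s$ gives $asb\neq0$; so it suffices to produce, for any homogeneous nonzero $a,b\in T$, some $s\in T$ with $asb\neq0$.

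Next I would use the normal form $t=\sum_w r_w\,w$ for elements of $T$, where $w$ runs over words in the free monoid on $x,y$, $r_w\in R$, word multiplication is concatenation, and $w\,r=\sigma_w(r)\,w$ with $\sigma_w\in\operatorname{Aut}(R)$ the corresponding composite of $\tau_1$'s and $\tau_2$'s; write $[\,t\,]_w:=r_w$. Fix homogeneous nonzero $a=\sum_u r_u u$ (all $u$ of length $m$) and $b=\sum_v b_v v$ (all $v$ of length $n$). Given $u_0\in\operatorname{supp}(a)$, $v_0\in\operatorname{supp}(b)$, an arbitrary word $z$, and $c\in R$, homogeneity pins down the only factorization $u_0zv_0=u\,z\,v$ with $u\in\operatorname{supp}(a)$ and $v\in\operatorname{supp}(b)$, namely $u=u_0$, $v=v_0$; hence
\[
\bigl[\,a\cdot(cz)\cdot b\,\bigr]_{u_0zv_0}=r_{u_0}\,\sigma_{u_0}(c)\,\sigma_{u_0z}(b_{v_0}).
\]
So $T$ can fail to be graded prime only if this element vanishes for every choice of $u_0,v_0,z,c$.

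Suppose it does. Since $\sigma_{u_0}$ is an automorphism and $c$ is arbitrary, this is equivalent to $p\cdot R\cdot\pi(b_{v_0})=0$ for all $v_0\in\operatorname{supp}(b)$, all $\pi$ in the submonoid $M$ of $\operatorname{Aut}(R)$ generated by $\tau_1,\tau_2$, and all $p$ in the nonempty set $P=\{\sigma_{u_0}^{-1}(r_{u_0}):u_0\in\operatorname{supp}(a)\}$. Put $\mathfrak B=\sum_{v_0,\,\pi\in M}R\,\pi(b_{v_0})\,R$ and $\mathfrak A=\{r\in R:r\mathfrak B=0\}$; then $\mathfrak B\neq0$, $P\subseteq\mathfrak A$ so $\mathfrak A\neq0$, $\mathfrak A\mathfrak B=0$, and both ideals are stable under $\tau_1$ and $\tau_2$. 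The hard part is to upgrade this to stability under $\tau_1^{-1}$ and $\tau_2^{-1}$ — i.e.\ genuine $(\tau_1,\tau_2)$-stability: the passage from the monoid $M$, which is all that concatenation of words inside $T$ makes visible, to the group $\langle\tau_1,\tau_2\rangle$ is the real content. It becomes routine once one knows that $\tau_i(\mathfrak B)\subseteq\mathfrak B$ forces $\tau_i(\mathfrak B)=\mathfrak B$, which holds when $R$ is noetherian (the chain $\mathfrak B\subseteq\tau_i^{-1}\mathfrak B\subseteq\tau_i^{-2}\mathfrak B\subseteq\cdots$ stabilizes) and, trivially, when each $\tau_i$ has finite order; the same gives $\tau_i(\mathfrak A)=\mathfrak A$. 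Then $\mathfrak A$ and $\mathfrak B$ are nonzero $(\tau_1,\tau_2)$-stable ideals with $\mathfrak A\mathfrak B=0$, contradicting $(\tau_1,\tau_2)$-primeness; so $T$ is graded prime, and by the first step $S$ is prime. I expect this monoid-to-group passage, together with the normal-form bookkeeping that isolates the displayed coefficient, to be where essentially all of the difficulty sits.
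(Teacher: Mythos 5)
Your reduction to the associated graded ring $T=R[x;\tau_1]*_RR[y;\tau_2]$ and the coefficient formula $\bigl[a\cdot(cz)\cdot b\bigr]_{u_0zv_0}=r_{u_0}\,\sigma_{u_0}(c)\,\sigma_{u_0z}(b_{v_0})$ are both correct, and up to packaging this is the same leading-term computation the paper performs directly in $S$. The endgame, however, contains a genuine gap. First, the paper's notion of $(\tau_1,\tau_2)$-stable asks only for $\tau_i(I)\subseteq I$, so no ``upgrade to stability under $\tau_i^{-1}$'' is required at all; what actually fails is the assertion you make just before that, namely that $\mathfrak A=\{r\in R: r\mathfrak B=0\}$ is stable under $\tau_1,\tau_2$. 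From $a\mathfrak B=0$ you only get $\tau_i(a)\,\tau_i(\mathfrak B)=0$, and since $\tau_i(\mathfrak B)$ may be a proper subideal of $\mathfrak B$, this does not give $\tau_i(a)\mathfrak B=0$. Your proposed repair---forcing $\tau_i(\mathfrak B)=\mathfrak B$---works only when $R$ is noetherian or the $\tau_i$ have finite order, but the theorem is stated for an arbitrary ring $R$, so as written your argument does not prove the stated result. The obvious alternative of replacing $\mathfrak A$ by the $(\tau_1,\tau_2)$-ideal generated by $P$ hits the same wall: you know $pR\pi(b_{v_0})=0$ only for $\pi$ in the monoid $M$, and applying $\mu\in M$ to that identity yields $\mu(p)R\pi(b_{v_0})=0$ only for $\pi\in\mu M$, not for all $\pi\in M$.

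The paper takes a different route precisely here: it never forms an annihilator. Lemma \ref{equivalent conditions 1} converts $(\tau_1,\tau_2)$-primeness into a positive elementwise statement by applying the hypothesis to the two \emph{visibly} stable ideals $I_r=\sum_{\mu\in M}R\mu(r)R$ and $I_s=\sum_{\nu\in M}R\nu(s)R$ generated by the leading coefficients; $I_rI_s\neq 0$ gives $\mu(r)r''\nu(s)\neq 0$ for some $\mu,\nu\in M$, and applying $\mu^{-1}$ produces a nonvanishing product $rs'\tau^{\mathbf j}(s)\neq 0$ which is then realized by a middle factor $r'w$. Using the hypothesis in this forward direction (produce one nonzero product) rather than in the contrapositive direction (produce two stable annihilating ideals) is exactly what removes the need for $\tau_i(\mathfrak B)=\mathfrak B$, and is the missing idea in your write-up. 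If you restructure your endgame this way, note that one delicate point remains which you should address explicitly: the automorphisms realizable by a middle monomial form the coset $\sigma_{u_0}M$, while the lemma hands you a general element of the form $\mu^{-1}\nu$ with $\mu,\nu\in M$, so these two sets have to be reconciled.
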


Since the notion of $(\tau_1,\tau_2)$-prime is weaker than the usual notion of prime, it immediately follows that if $R$ is prime, then $S$ is also prime (Theorem \ref{Rprime->Sprime}). Secondly, we show a prime $(\tau_1,\tau_2,\delta_1,\delta_2)$-ideal $I$ in a noetherian ring $R$ will generate a prime ideal in $S$, and the factor ring $S/I$ is isomorphic to the amalgamated product $(R/I)\left[ x,\tau_1,\delta_1\right]*_{R/I}(R/I)\left[y,\tau_2,\delta_2\right]$. Certain localizations of $S$ reduce to studying the case where the base ring $R$ is a division ring.

\begin{theorem} Let $\tau_1$ and $\tau_2$ be automorphisms of a noetherian domain $R$, and let $ X:=R\backslash\{0\}$. Then
\begin{enumerate}[(i)]
\item $X$ is a right denominator set in $S$, and
\item the ring of fractions $SX^{-1}$ is isomorphic to the amalgamated product 
\[(R X^{-1})[x;\tau_1,\delta_1]*_{R X^{-1}}(R X^{-1})[y;\tau_2,\delta_2],\]
\end{enumerate}
where $\tau_1,\tau_2,\delta_1,\delta_2$ denote the induced maps on the division ring of fractions $R X^{-1}$.
\end{theorem}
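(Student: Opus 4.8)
The plan is to exhibit the candidate ring
\[
T:=(RX^{-1})[x;\tau_1,\delta_1]\,*_{RX^{-1}}\,(RX^{-1})[y;\tau_2,\delta_2]
\]
as the right Ore localization $SX^{-1}$ by constructing a ring homomorphism $\psi\colon S\to T$ and verifying the three properties that characterise a ring of right fractions: $\psi$ is injective, $\psi(X)\subseteq T^{\times}$, and every element of $T$ has the form $\psi(s)\psi(a)^{-1}$ with $s\in S$, $a\in X$. Granting these, the standard criterion for noncommutative localization yields at once that $X$ is a right denominator set in $S$ (part (i)) and that $\psi$ induces an isomorphism $SX^{-1}\cong T$ (part (ii)). To set things up, put $D:=RX^{-1}$, the division ring of fractions of the noetherian domain $R$ (a right and left Ore domain by Goldie's theorem); since each $\tau_i$ is an automorphism it extends to an automorphism of $D$, and each $\delta_i$ extends to a $\tau_i$-derivation of $D$. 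Let $A_1:=D[x;\tau_1,\delta_1]$ and $A_2:=D[y;\tau_2,\delta_2]$. It is standard that, because $\tau_i$ is an automorphism of the Ore domain $R$, the set $X$ is a right and left denominator set in $S_i$ with $S_iX^{-1}\cong A_i$. Put $T:=A_1*_DA_2$; the canonical maps $S_1\to A_1\to T$ and $S_2\to A_2\to T$ agree on $R$, so the universal property of the pushout produces $\psi\colon S\to T$, and clearly $\psi(X)\subseteq D^{\times}\subseteq T^{\times}$.

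The second point is that the right fractions $\{\psi(s)\psi(a)^{-1}:s\in S,\,a\in X\}$ exhaust $T$. Every element of $T$ is a sum of products of elements of $A_1$ and $A_2$, each of which is already a right fraction over $X$. To clear denominators in a product one pushes each $\psi(a)^{-1}$ to the right past the following factor: when that factor is $\psi(p)$ with $p$ in a single $S_i$ the right Ore condition for $X$ in $S_i$ gives $\psi(a)^{-1}\psi(p)=\psi(q)\psi(b)^{-1}$ (with $q\in S_i$, $b\in X$), and when it is $\psi(r)$ with $r\in R$ one uses the right Ore condition in $R$; iterating along words in $x$ and $y$ shows that a product of right fractions is again a right fraction, and taking common right multiples in $X$ (again because $R$ is right Ore) does the same for sums. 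Since the set of right fractions contains $\psi(S_1)$, $\psi(S_2)$ and $\psi(X)^{-1}$, which generate $T$ as a ring, it is all of $T$.

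The heart of the matter, and what I expect to be the main obstacle, is the injectivity of $\psi$. Using the relations $xr=\tau_1(r)x+\delta_1(r)$ and $yr=\tau_2(r)y+\delta_2(r)$ one moves every coefficient from $R$ all the way to the left, so that $S$ is spanned as a left $R$-module by $1$ together with the set $W$ of reduced words: the alternating products of positive powers of $x$ and of $y$. Under $\psi$ these go to the reduced words of $T=A_1*_DA_2$ over the division ring $D$, and by the normal form theorem for amalgamated products $A_1*_DA_2$ of rings that are free as left $D$-modules (with $D$ a direct summand) they are left $D$-linearly independent in $T$. Since $R$ embeds in $D$, it follows that $\psi$, which sends $\sum_w r_w w$ (with $r_w\in R$, $w\in W\cup\{1\}$) to $\sum_w r_w\,\psi(w)$, is injective: an element of its kernel would give a $D$-linear dependence among the $\psi(w)$. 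This also shows $W\cup\{1\}$ is an $R$-basis of $S$. In particular, as $\psi$ is injective and $\psi(X)\subseteq T^{\times}$, every $a\in X$ is regular in $S$, so $S$ has no nonzero right $X$-torsion.

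With $\psi$ injective and $\psi(X)$ invertible (so $\ker\psi$ coincides with the, necessarily zero, right $X$-torsion of $S$) and every element of $T$ a right fraction, the standard characterisation of rings of right fractions gives that $X$ is a right denominator set in $S$ and that $\psi$ induces an isomorphism
\[
SX^{-1}\;\cong\;T=(RX^{-1})[x;\tau_1,\delta_1]\,*_{RX^{-1}}\,(RX^{-1})[y;\tau_2,\delta_2],
\]
which is exactly assertions (i) and (ii). The only ingredient not internal to the argument is the normal form for amalgamated products invoked in the injectivity step; once that is in hand (cited, or established in the amalgamated-products section), the rest is routine bookkeeping with fractions.
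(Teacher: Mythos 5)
Your proposal is correct and follows essentially the same route as the paper: the paper proves a more general version (Theorem \ref{5555}, for any right denominator set $X$ with $\tau_i(X)=X$) by identifying the amalgamated product over $RX^{-1}$ as $\sum_I RX^{-1}w_I$, commuting the $m^{-1}$ past $x$ and $y$, and finding common right denominators, exactly as you do. The injectivity point you rightly flag as the key external ingredient is supplied in the paper by its earlier observation that the words in $x,y$ form a left basis for an amalgamated product of Ore extensions (proved there by the semigroup-ring module construction), applied over the base $RX^{-1}$.
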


In Section \ref{Quadratic Extensions}, we study the amalgamated product $Q_1*_RQ_2$ of quadratic extensions of a base ring $R$. We call an extension of rings $R\subseteq A$ \emph{quadratic} if there exists $x\in A\backslash R$ so that $A$ is a free left $R$-module with basis $\{1,x\}$. A quadratic extension of $R$ will necessarily be isomorphic to a factor ring of an Ore extension $R[x;\tau,\delta]$. More precisely, $A\cong R[x;\tau,\delta]/I$, where $I$ is a principal ideal in $R[x;\tau,\delta]$ generated by an element of the form $x^2+ax+b$ ($a,b\in R$). Let $Q_1=S_1/I_1$ and $Q_2=S_2/I_2$ be quadratic extensions over $R$, and put $Q=Q_1*_RQ_2$. Under certain conditions, the ideal theory of $Q$ turns out to have a nice description.  In particular, we have an analogue of the Hilbert Basis theorem.

\begin{theorem}
If $\tau_1$ and $\tau_2$ are automorphisms of a noetherian base ring $R$, then $Q$ is noetherian.
\end{theorem}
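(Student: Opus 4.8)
One first records the $R$-module structure of $Q$. Write $Q_i\cong R[x_i;\tau_i,\delta_i]/(x_i^2+a_ix_i+b_i)$ with $a_i,b_i\in R$. Since each $\tau_i$ is an automorphism, each $Q_i$ is free as a right $R$-module (as well as a left $R$-module) on $\{1,x_i\}$, and a normal-form argument (equivalently, the general theory of coproducts of rings over a common subring) shows $Q$ is free as a left $R$-module on $\{1\}$ together with the \emph{alternating words} $x_{i_1}x_{i_2}\cdots x_{i_n}$, meaning $i_j\ne i_{j+1}$. The plan is then: filter $Q$ by word length, compute the associated graded ring $G$, show $G$ is module-finite over a noetherian subring, and lift noetherianity from $G$ back to $Q$.

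Set $F_{-1}Q=0$ and, for $n\ge0$, let $F_nQ$ be the left $R$-span of the alternating words of length at most $n$, so $F_0Q=R$ and $\bigcup_nF_nQ=Q$. I would first verify that $\{F_nQ\}$ is a ring filtration: for $r,s\in R$ and alternating words $w,w'$, moving $s$ leftward past $w$ via $x_ir=\tau_i(r)x_i+\delta_i(r)$ only rescales the coefficient of $w$ by an automorphism of $R$ and adds terms supported on strictly shorter words; and $ww'$ is either the alternating word of length $\ell(w)+\ell(w')$, or, when the last letter of $w$ equals the first of $w'$, is rewritten by $x_i^2=-a_ix_i-b_i$ into strictly shorter words. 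Hence $F_mQ\cdot F_nQ\subseteq F_{m+n}Q$, and in the associated graded ring $G:=\operatorname{gr}(Q)=\bigoplus_{n\ge0}F_nQ/F_{n-1}Q$ all these corrections vanish: $G$ is the $R$-algebra on principal symbols $\bar x_1,\bar x_2$ with relations $\bar x_ir=\tau_i(r)\bar x_i$ and $\bar x_i^2=0$, and it is free as a left $R$-module on the alternating words in $\bar x_1,\bar x_2$.

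Now put $\sigma_1:=\tau_1\tau_2$ and $\sigma_2:=\tau_2\tau_1$ (automorphisms of $R$), $u:=\bar x_1\bar x_2$, $v:=\bar x_2\bar x_1$, and let $B\subseteq G$ be the $R$-subalgebra generated by $u,v$. Then $ur=\sigma_1(r)u$, $vr=\sigma_2(r)v$, while $uv=vu=0$ because $\bar x_i^2=0$; comparing $R$-bases identifies $B$ with the fibre product $R[u;\sigma_1]\times_RR[v;\sigma_2]$ (pairs of skew polynomials with equal constant term). The subset $I_u:=uB=Bu=\bigoplus_{k\ge1}Ru^k$ is a two-sided ideal of $B$ with $B/I_u\cong R[v;\sigma_2]$, which is noetherian by the Hilbert Basis theorem for Ore extensions; moreover $v$ annihilates $I_u$ on both sides, so $I_u$ is a module over $B/(v)\cong R[u;\sigma_1]$, namely its ideal $uR[u;\sigma_1]$, hence noetherian. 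An extension of a noetherian module by a noetherian module being noetherian, $B$ is left and right noetherian. Finally, each alternating word in $\bar x_1,\bar x_2$ of even length is a power of $u$ or of $v$, and each of odd length is such a power with a single $\bar x_1$ or $\bar x_2$ attached at one end (using $\bar x_1v^k=u^k\bar x_1$, $\bar x_2u^k=v^k\bar x_2$, and the vanishing of $\bar x_i\cdot u^k$ or $\bar x_i\cdot v^k$ when a letter repeats); hence $G=B+\bar x_1B+\bar x_2B=B+B\bar x_1+B\bar x_2$ is finitely generated as a left and as a right $B$-module. A ring finitely generated as a module over a noetherian subring is noetherian, so $G$ is noetherian, and the standard lifting principle for exhaustive $\mathbb{N}$-filtered rings then yields that $Q$ is noetherian.

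The main obstacle is organizational rather than conceptual: one must check with care that the length filtration is multiplicative and that $G$ carries exactly the displayed presentation, so that the cross monomials $u=\bar x_1\bar x_2$, $v=\bar x_2\bar x_1$ genuinely persist in $G$ while the squares $\bar x_i^2$ die. Once this is pinned down, the module-finiteness of $G$ over $B$ and the noetherianity of $B$ follow as above.
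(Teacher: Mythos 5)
Your proof is correct, but it takes a genuinely different route from the paper. The paper gives a direct, self-contained Hilbert-basis-style argument: for a right ideal $I$ it forms the two "leading coefficient" right ideals $L_1,L_2$ of $R$ (one for the $x^{(i)}$-coefficients, one capturing the $y^{(i)}$-coefficients), chooses finitely many elements of $I$ realizing generators of $L_1$ and $L_2$ at a common degree $N$, adds generators of $I\cap(R+x^{(1)}R+\cdots+y^{(N-1)}R)$, and then runs an induction on degree, subtracting suitable right multiples (twisted by $(\tau^{(N)})^{-1}$ because coefficients are on the left) to reduce the two leading coefficients in turn. Your approach instead passes to the associated graded ring of the word-length filtration, where the $\delta_i$ and the lower-order terms $a_i,b_i$ disappear, leaving $\bar x_i r=\tau_i(r)\bar x_i$ and $\bar x_i^2=0$; you then exhibit $G$ as a module finite over the noetherian subring $B\cong R[u;\tau_1\tau_2]\times_R R[v;\tau_2\tau_1]$ and lift noetherianity through the filtration. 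This buys modularity (you reuse the skew Hilbert Basis theorem, the fact that a ring module-finite over a noetherian subring is noetherian, and the standard lifting lemma for exhaustive $\mathbb{N}$-filtrations) and it isolates cleanly where the automorphism hypothesis enters (two-sided freeness of $Q$ over $R$ and noetherianity of the Ore extensions $R[u;\tau_1\tau_2]$, $R[v;\tau_2\tau_1]$); the cost is that you invoke more machinery, and two of your verifications deserve to be written out rather than asserted: the multiplicativity of the filtration requires an induction on length, since collapsing $x_i^2$ to $b_i$ can create a second collision between the surviving halves of the two words, and the left $R$-freeness of $Q$ on alternating words (which you attribute to coproduct theory) is itself a nontrivial fact that the paper proves separately by constructing a faithful $Q$-module. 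Both approaches handle left and right noetherianity symmetrically.
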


Furthermore, we prove the following.
\begin{theorem}
\label{almostPID}If the base ring $R$ is a division ring and $\tau_1$ and $\tau_2$ are automorphisms, then all one-sided ideals of $Q$ are either principal or doubly generated.
\end{theorem}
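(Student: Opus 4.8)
The plan is to pass to the associated graded ring for the word-length filtration, classify the homogeneous one-sided ideals there, and then lift. I will freely use the normal form established earlier in this section: because $\tau_1$ and $\tau_2$ are automorphisms, $Q$ is free both as a left and as a right $R$-module on the set $W$ of alternating words in $x$ and $y$ (including the empty word $1$), and for each $n\ge 1$ there are exactly two words of length $n$, say $u_n=xyx\cdots$ and $v_n=yxy\cdots$. Set $F_n=\bigoplus_{|w|\le n}Rw$. This is an exhaustive ring filtration with $F_{-1}=0$, and since $x^2,y^2\in F_1$ while $\delta_1,\delta_2$ do not raise degree, the associated graded ring $G:=\operatorname{gr}(Q)$ satisfies $G_0=R$, $G_n=Ru_n\oplus Rv_n$ for $n\ge 1$, and is the amalgamated product over $R$ of the square-zero quadratic extensions $R[\bar x;\tau_1]/(\bar x^2)$ and $R[\bar y;\tau_2]/(\bar y^2)$.

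Next I would record how one-sided multiplication acts on $G$. For $n\ge 1$ exactly one of $u_n,v_n$ ends in the letter $x$ and the other ends in $y$; right multiplication by $\bar x$ annihilates the line spanned by the word ending in $x$ and carries the other line semilinearly and bijectively onto the line of $G_{n+1}$ obtained by appending $x$, and symmetrically for $\bar y$. Hence $G_{n+1}=G_n\bar x\oplus G_n\bar y$, and for a graded right $R$-submodule $J=\bigoplus_nJ_n$ of $G$ the condition of being a right ideal collapses to $J_n\bar x+J_n\bar y\subseteq J_{n+1}$ for all $n$ (the relation $\bar x r=\tau_1(r)\bar x$ absorbs right multiplication by $R$). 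The left-ideal case is the mirror image, using left multiplication and the right-module normal form; this is where the automorphism hypothesis is used a second time.

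The substantive step is the classification of homogeneous right ideals $J=\bigoplus_nJ_n$ of $G$. If $J_0\ne 0$ then $J_0=R$ (division ring), so $J=G$. Otherwise let $n$ be least with $J_n\ne 0$, so $\dim_RJ_n\in\{1,2\}$. If $\dim_RJ_n=2$, then $J_n=G_n$, hence $J_k=G_k$ for all $k\ge n$ and $J$ is generated by $\{u_n,v_n\}$. Suppose $\dim_RJ_n=1$, $J_n=wR$. If $w$ lies on neither coordinate line of $G_n$, then $w\bar x$ and $w\bar y$ are both nonzero and span complementary lines, forcing $J_{n+1}=G_{n+1}$ and hence $J_k=G_k$ for $k>n$; then $J=wG$ is principal. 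If $w$ lies on a coordinate line, one of $w\bar x,w\bar y$ vanishes and the other spans a single line $L\subseteq G_{n+1}$, so $J_{n+1}$ is either $L$ or $G_{n+1}$. Iterating this dichotomy, either every $J_k$ ($k\ge n$) is a single line --- necessarily along the $u$-chain or the $v$-chain determined by $w$ --- and $J=wG$ is principal, or there is a least $m>n$ with $\dim_RJ_m=2$, and then $J=\bigoplus_{n\le k<m}J_k\oplus\bigoplus_{k\ge m}G_k$ is generated by $w$ together with one element of degree $m$ lying off the chain. In every case $J$ is principal or generated by two elements; the left-ideal case is identical.

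It remains to lift to $Q$, which is the standard filtered-ring argument. Given a right ideal $I$ of $Q$, the subspace $\operatorname{gr}(I)=\bigoplus_n(I\cap F_n)/(I\cap F_{n-1})$ is a homogeneous right ideal of $G$, so by the classification it equals $\bar a_1G+\bar a_2G$ (perhaps with a single generator) for homogeneous $\bar a_i$ of degree $d_i$. Choose $a_i\in I\cap F_{d_i}$ with leading symbol $\bar a_i$. For $q\in I$ nonzero of degree $d$, the leading symbol of $q$ lies in $\operatorname{gr}(I)_d=\bar a_1G_{d-d_1}+\bar a_2G_{d-d_2}$, so there are $z_1,z_2$ with $q-a_1z_1-a_2z_2\in I\cap F_{d-1}$; since the filtration is bounded below, induction on $d$ gives $I=a_1Q+a_2Q$. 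The same argument handles left ideals. I expect the one genuinely delicate point to be the homogeneous classification: keeping track of which word of a given length ends in $x$ versus $y$, hence which line each one-sided multiplication annihilates, and handling the chain subcase correctly; the normal form, the identification of $\operatorname{gr}(Q)$, and the lifting are all routine.
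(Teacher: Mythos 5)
Your proof is correct, and it reaches the theorem by a genuinely different route than the paper. The paper argues directly inside $Q$: it fixes $p\in I$ of minimal degree $n$, splits into three cases according to the possible configurations of leading coefficients (no element of $I$ ever acquires a leading $\widehat{x}^{(i)}$-coefficient; only elements of degree $>n$ do; or $p$ itself has both leading coefficients nonzero), and in each case kills the leading term of an arbitrary $f\in I$ by subtracting left multiples $\widehat{x}_{n,m}p$, $\widehat{y}_{n,m}p$, $\widehat{x}_{l,m}p^\prime$ of at most two chosen elements, closing with induction on degree. You instead pass to $\operatorname{gr}(Q)$ for the length filtration, classify the homogeneous one-sided ideals there (chain forever, chain-then-everything, off-axis line-then-everything, or everything), and lift by the standard filtered argument. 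The two proofs have the same combinatorial core --- your dichotomy of coordinate lines versus off-axis lines in the rank-two modules $G_n$ is exactly the paper's Case I/II versus Case III, and your lifting step is the paper's degree induction --- but the graded packaging isolates that core as a clean statement about $\operatorname{gr}(Q)\cong R[\bar x;\tau_1]/(\bar x^2)*_R R[\bar y;\tau_2]/(\bar y^2)$ and even yields a small refinement: when the minimal-degree part of $\operatorname{gr}(I)$ is a single off-axis line, $I$ is genuinely principal, whereas the paper's Case III settles for two generators (while noting one may be redundant). Two points deserve explicit care in a write-up. First, the freeness of each $G_n$ as a \emph{right} $R$-module on the two alternating words (so that $J_n$ is forced to be $0$, a line, or all of $G_n$) is precisely where the automorphism hypothesis enters for right ideals, and symmetrically for left ideals. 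Second, if you classify left ideals using left multiplication on the basis $x^{(\ell)},y^{(\ell)}$, the "chains" alternate between the two coordinate lines (prepending a letter to $x^{(\ell)}$ produces $y^{(\ell+1)}$); this does no harm, but it is cleaner to switch to the reversed-word basis $\widehat{x}^{(\ell)},\widehat{y}^{(\ell)}$ as the paper does, so that each chain stays on one side.
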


Theorem \ref{almostPID} above is reminiscent of a famous result of Stafford. In \cite{Stafford}, Stafford proves the Weyl algebras $A_n(k):=k\left<x_1,...,x_n,\frac{\partial}{\partial x_1},...,\frac{\partial}{\partial x_n}\right>$ over a field $k$ of characteristic zero exhibit this property: every one-sided ideal of $A_n(k)$ is principal or doubly generated. Placing extra restrictions on the base ring $R$ and the skew derivations $\delta_1$, $\delta_2$ forces the amalgamated product $Q$ to be a principal ideal ring.

\begin{theorem}
If $\tau_1$  and $\tau_2$ are automorphisms of a division ring $R$ and the skew derivations $\delta_1$, $\delta_2$ are not $\tau_1$ (resp $\tau_2$)-inner, then $Q$ is a principal ideal ring.
\end{theorem}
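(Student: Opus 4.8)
The plan is to strengthen Theorem~\ref{almostPID}: it already guarantees that every one-sided ideal of $Q$ is principal or doubly generated, so it suffices to rule out a genuine need for two generators, and the extra hypothesis enters through the structure of the two factors.

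The first step is a lemma: if $\delta_i$ is not $\tau_i$-inner, then $Q_i$ is a simple ring. Indeed, a nonzero proper two-sided ideal $J$ of $Q_i=R\oplus Rx$ is, in particular, a left $R$-submodule of a two-dimensional left vector space over the division ring $R$, hence one-dimensional; it cannot be contained in $R$ (since $Rx\not\subseteq R$, $R$ is not a right ideal of $Q_i$), so $J=R(c+x)$ for some $c\in R$, and then requiring $(c+x)r\in J$ for all $r\in R$ forces $\delta_i(r)=\tau_i(r)c-cr$, i.e. $\delta_i$ is $\tau_i$-inner — a contradiction. Being finite over the division ring $R$, each $Q_i$ is thus simple Artinian, hence itself a principal one-sided ideal ring; and since $Q_i$ is free over $R$ on each side, $Q=Q_1*_RQ_2$ is the coproduct, over the division (so semisimple) ring $R$, of the hereditary rings $Q_1,Q_2$. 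By the coproduct theorems of Cohn and Bergman, $Q$ is hereditary, and combined with this paper's Hilbert-basis theorem ($R$ being noetherian) it is hereditary noetherian; the word-length filtration of $Q$ grows only linearly over $R$, which pins its Krull dimension at one and is consistent with the two-generator bound of Theorem~\ref{almostPID}.

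I would then treat the one-sided ideals. When both quadratics $x^2+a_ix+b_i$ are irreducible, each $Q_i$ is a skew field, Cohn's coproduct theorem shows $Q$ is a fir, and a noetherian left (resp. right) fir that is a domain is a principal left (resp. right) ideal domain — a noetherian domain is Ore, so ${}_QQ$ is uniform and every finitely generated free left ideal has rank at most one. In general I would invoke Bergman's classification of finitely generated projective modules over a coproduct: every such $Q$-module is assembled from modules induced from $R$ and from $Q_1,Q_2$, all semisimple and so having only cyclic projectives; a nonzero one-sided ideal of $Q$ is projective of ``rank one'', and the classification, together with the Krull-dimension-one constraint, forces it to be cyclic. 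Hence $Q$ is a principal ideal ring.

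The step I expect to be the main obstacle is exactly this last argument in the non-domain case: when $Q_1$ or $Q_2$ is a proper matrix ring over a division ring — which the hypothesis does permit, for instance when $Q_i$ is a split generalized quaternion algebra — $Q$ has zero divisors and nontrivial idempotents, ``rank'' must be replaced by the length-type invariants of Bergman's coproduct theory, and one has to follow the idempotents of $Q_1,Q_2$ through the amalgamation. This bookkeeping is where the hypothesis ``$\delta_i$ not $\tau_i$-inner'' is genuinely spent, always via the simplicity of $Q_i$ from the first step. A fallback that avoids coproduct $K$-theory is to refine the Euclidean-style reduction behind Theorem~\ref{almostPID} so that an irreducible two-generator remainder is forced into a copy of $Q_1$ or $Q_2$ inside $Q$, where — those rings being semisimple — it is automatically cyclic and can be absorbed into the leading generator.
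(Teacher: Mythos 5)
Your opening lemma is correct and well argued: if $\delta_i$ is not $\tau_i$-inner then $Q_i$ has no nonzero proper two-sided ideal, since such an ideal would be a one-dimensional left $R$-subspace $R(c+x)$ and closure under right multiplication by $R$ forces $\delta_i(r)=\tau_i(r)c-cr$. The genuine gap is exactly the step you flag, and it is not mere bookkeeping: from ``hereditary, noetherian, Krull dimension one, every one-sided ideal projective of rank one'' you cannot conclude cyclicity. The ring $\mathbb{Z}[\sqrt{-5}]$ satisfies every one of those conditions and is not a principal ideal ring. So the Bergman--Cohn coproduct machinery, as invoked, does not finish the argument; you would need an actual computation showing that every rank-one finitely generated projective over $Q$ is free (or at least cyclic), i.e., a $K_0$/monoid-of-projectives calculation together with a cancellation argument, and none of that is supplied. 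The fir argument disposes only of the case where both $Q_i$ are division rings, which the hypotheses do not guarantee, so the unresolved case is not optional. A further caution: your ``Krull dimension one'' is inferred from linear growth of the word-length filtration, which controls Gelfand--Kirillov dimension, not Krull dimension.

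You are also proving a different statement than the paper does. The paper's proof of this theorem treats only \emph{two-sided} ideals $I$ (one-sided ideals are covered by Theorem \ref{almostPID}, where two generators can genuinely be required), and the argument is elementary and direct: if some minimal-degree element of $I$ had the form $f=x^{(n)}+a_{n-1}x^{(n-1)}+b_{n-1}y^{(n-1)}+\cdots$ (leading $y^{(n)}$-coefficient zero), then $fr-\tau^{(n)}(r)f\in I$ has degree $<n$, hence vanishes, and the vanishing of its $y^{(n-1)}$-coefficient reads $\delta_1(r)=\tau_1(r)b_{n-1}-b_{n-1}r$ for all $r$, contradicting non-innerness. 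Hence every minimal-degree element of $I$ has both leading coefficients nonzero; a short computation shows they are all left associates of a single $p$, and a division-algorithm induction on degree gives $I=\left<p\right>$. Note that this commutator trick $fr-\tau^{(n)}(r)f$ is unavailable for one-sided ideals, which is precisely why the paper's conclusion is confined to two-sided ideals. If you want to salvage your approach, either restrict to two-sided ideals and give the direct argument, or supply the missing projective-module computation for the strictly stronger one-sided claim, whose truth the paper does not assert.
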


One application to studying these types of extensions and their amalgamated products comes from the theory of double affine Hecke algebras. The double affine Hecke algebras are algebras related to symmetric polynomials and were introduced by Cherednik in the early 1990's \cite{Ch}. They were instrumental to the proof of the Macdonald constant-term conjectures \cite{Mac}. 

For a reductive algebraic group $G$ with rank $n$ over a field $k$, the DAHA $\mathbb{H}_{q,t}(G)$ associated to $G$ is a deformation (with deformation parameters $q_1,...,q_n,t_1,...,t_n$) of the group algebra of the \emph{extended double affine Weyl group} $W\ltimes (P\oplus P)$, where $W$ is the Weyl group of $G$ and $P$ is the weight lattice. When the deformation parameters are all specialized to $1\in k$, we recover the group algebra $k[W\ltimes (P\oplus P)]$. 

In \cite{Gehles}, Gehles studies the structure theory of the DAHAs and their associated trigonometric and rational degenerations and proves the DAHAs are noetherian when the deformation parameters $q_1,...,q_n$ are specialized to $1\in k$ (but the $t_i$'s may be arbitrarily chosen) \cite[Corollary 2.1.9]{Gehles}. It remains an open problem to prove noetherianity for arbitrary deformation parameters. 

Let $k$ be a field with $\text{char}(k)\neq 2$ and let ${\mathcal O}_{\bf q}((k^\times)^3)$ denote the $k$-algebra generated by invertible variables $z_1,z_2,z_3$ and subject to the defining relations $z_1z_2=z_2z_1$, $z_1z_3=q^{-1}z_3z_1$, and $z_2z_3=q^{-1}z_3z_2$. The ring ${\mathcal O}_{\bf q}((k^\times)^3)$ is commonly referred to as (the quantized coordinate ring of) a three-dimension quantum torus. In Section \ref{DAHA}, we prove $\mathbb{H}_{q,t}(GL_2(k))$ is isomorphic to an amalgamated product of quadratic extensions over ${\mathcal O}_{\bf q}((k^\times)^3)$.   In Theorem \ref{DAHA as an amalgamated product}, we give an explicit isomorphism. Here, the deformation parameters $q$ and $t$ may be chosen to be arbitrary nonzero scalars in $k$ such that $t^{1/2}$ exists. Since ${\mathcal O}_{\bf q}((k^\times )^3)$ is known to be noetherian, it follows that  $\mathbb{H}_{q,t}(GL_2(k))$ is also noetherian. 
\\

{\bf Acknowledgments.} This paper is based on some work from my Ph.D. thesis \cite{J} written at the University of California at Santa Barbara. I am grateful to Ken Goodearl and Milen Yakimov for suggesting the problems that motivated this work.

\section{Amalgamated Products of Rings}

\setcounter{theorem}{0}
In this section, we prove some general results concerning the structure theory of amalgamated products of quadratic extensions and Ore extensions of rings. The results are then applied to the example of the double affine Hecke algebra $\mathbb{H}_{q,t}(GL_2(k))$. 

\begin{definition}\label{def: amalgamated product}
Let $\beta_1:R\to S_1$ and $\beta_2:R\to S_2$ be ring homomorphisms. 
The \emph{amalgamated product of $S_1$ and $S_2$ along $R$} is a triple $(S,\phi_1,\phi_2)$ where  $\phi_1:S_1\to S$ and $\phi_2:S_2\to S$ are ring homomorphisms satisfying $\phi_1\beta_1 = \phi_2\beta_2$ and the universal property: Given any ring $S^\prime$ with homomorphisms $\psi_1:S_1\to S^\prime$, $\psi_2:S_2\to S^\prime$ satisfying $\psi_1\beta_1=\psi_2\beta_2$, there exists a unique homomorphism $\theta :S\to S^\prime$ so that the following diagram commutes.
\begin{center}\xymatrix{
&&&&&&S_1\ar[dr]_{\phi_1}\ar@/^/[drrr]^{\psi_1}&&\\
&&&&&R\ar[ur]^{\beta_1}\ar[dr]_{\beta_2}&&S\ar@{-->}[rr]^{\theta}&&S^\prime\\
&&&&&&S_2\ar[ur]^{\phi_2}\ar@/_/[urrr]_{\psi_2}&&&}\end{center}
\end{definition}

In all examples we consider, $R$ is a subring of $S_i$ ($i=1,2$) and $\beta_i:R\to S_i$ are inclusion maps. When referring to an amalgamated product, we will usually not mention the ring $R$ and simply say $S$ is an amalgamated product of $S_1$ and $S_2$. It is well-known that an amalgamated product $S$ of $S_1$ and $S_2$ exists and is unique up to isomorphism. Thus, we will call $S$ \emph{the} amalgamated product and denote it by $S=S_1*_RS_2$. The presentation we will use for $S_1*_RS_2$ is given by the generating set $R\sqcup S_1\sqcup S_2$.  There are two main types of relations among the generators. First of all, if $r$ and $s$ are both in $R$, $S_1$, or $S_2$, then their product in $S_1*_RS_2$ is the same as their product in the appropriate ring. The other defining relations are the ``cross-relations": for every $r\in R$, $s_1\in S_1$, and $s_2\in S_2$,
\begin{enumerate}[(i)]
\item $rs_1:=$ the product $\beta_1(r)\cdot s_1$ in $S_1$, 
\item $s_1r:=$ the product $s_1\cdot\beta_1(r)$ in $S_1$,
\item $rs_2:=$ the product $\beta_2(r)\cdot s_2$ in $S_2$,
\item $s_2r:=$ the product $s_2\cdot\beta_2(r)$ in $S_2$,
\item $r=\beta_1 (r) = \beta_2(r)$.
\end{enumerate}

\subsection{Amalgamated Products of Ore extensions}\label{Ore Extensions}

One example we consider is the amalgamated product of Ore extensions over a base ring $R$. Recall, an \emph{Ore extension over $R$} with left-hand coefficients is a ring $A$ satisfying the following conditions:
\begin{enumerate}[(i)]
\item $A$ contains $R$ as a subring,
\item there exists $x\in A$ so that $A$ is a free left $R$-module having basis $\{1,x,x^2,x^3,...\},$ and 
\item $xR\subseteq Rx+R$.
\end{enumerate}

From the definition, it follows that there exists a ring endomorphism $\tau:R\to R$ and a left $\tau$-derivation $\delta:R\to R$ (i.e. a $\mathbb{Z}$-linear map satisfying $\delta (rs) = \tau(r)\delta(s)+\delta(r)s$ for all $r,s\in R$) so that 
\begin{equation}\label{xr=...}xr=\tau (r)x+\delta(r)\end{equation}
for every $r\in R$. Given any such pair of maps $(\tau, \delta)$, a corresponding Ore extension exists and is unique up to isomorphism. We denote it by $R[x;\tau,\delta]$. 
\\

Throughout the remainder of this section, $\tau_1$ and $\tau_2$ will denote automorphisms of a ring $R$, and $\delta_1$ and $\delta_2$ are left $\tau_1$ (resp. $\tau_2$)-derivations of $R$.
\\

We let $S_1=R[x;\tau_1,\delta_1]$, $S_2=R[y;\tau_2,\delta_2]$, and put $S=S_1*_RS_2$. It follows from Eqn. \ref{xr=...} that the set of words in the letters $x$ and $y$ are a spanning set (over $R$) for $S$. (In fact, the set of words is an $R$-basis of $S$. One can verify $R$-linear independence by using the standard argument: let $E$ be the semigroup ring over $R$ of the free semigroup on two letters $X$ and $Y$, viewed as a left $R$-module, and construct an $S$-module structure on $E$ that mimics left multiplication in $S$. Finally consider the image of $1\in E$ by the action of an arbitrary element $s\in S$.)  There are two natural ways to order words, alphabetically and by length. Ordering first by length, then alphabetically (if lengths equal) gives us a total ordering on words. Thus, every nonzero $s\in S$ may be written in the form $s=rw+[\text{lower terms]}$ for some nonzero $r\in R$ and word $w$. We call $rw$ the \emph{leading term} of $s$, and $r$ is the \emph{leading coefficient}. 

We need to recall the notion of $\eta$-stable ideals and establish some notation used in the results that follow. Let $R^R$ be the set of all functions from a ring $R$ to itself, and let $\mathbb{X}$ be a subset of $R$. If $\eta\subseteq R^R$ and $f(\mathbb{X})\subseteq\mathbb{X}$ for all $f\in\eta$, then $\mathbb{X}$ is called \emph{$\eta$-stable}. An ideal $I\subseteq R$ is called an \emph{$\eta$-ideal} if $I$ is $\eta$-stable. A proper ideal $P\subseteq R$ is \emph{$\eta$-prime} if for any pair of $\eta$-ideals $I,J$ of $R$ with $IJ\subseteq P$, we have either $I\subseteq P$ or $J\subseteq P$. A ring $R$ is called \emph{$\eta$-prime} if $0$ is an $\eta$-prime ideal of $R$. In what follows, we let $\mathbb{Z}_{\geq 0}^{fin. seq.}$ ($\mathbb{Z}^{fin. seq.}$) denote the set of sequences in $\mathbb{Z}_{\geq 0}$ (and $\mathbb{Z}$ resp.) having finitely many nonzero terms. For such a sequence ${\bf j}=(j_1,j_2,j_3...)$, we use the notation $\tau^{\bf j}$ to mean the automorphism $\tau_1^{j_1}\tau_2^{j_2}\tau_1^{j_3}\tau_2^{j_4}\tau_1^{j_5}\cdots$.

\begin{theorem} If $R$ is a domain,  then $S$ is a domain also.\end{theorem}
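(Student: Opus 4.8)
The plan is to use the monomial ordering on words already introduced in the text and track leading terms under multiplication. Recall every nonzero $s\in S$ can be written as $s=rw+[\text{lower terms}]$ with $r\in R\setminus\{0\}$ and $w$ a word in $x,y$; this relies on the fact (asserted above via the $E$-module argument) that words form an $R$-basis of $S$. The key point is that multiplying two such leading terms behaves well: if $s=rw+(\text{lower})$ and $s'=r'w'+(\text{lower})$, then $ss'=r\,\tau^{\bf j}(r')\,ww'+(\text{lower terms})$, where $\tau^{\bf j}$ is the composite automorphism obtained by commuting $r'$ leftward past the word $w$ using relation \eqref{xr=...} repeatedly. Indeed, each application of $xr=\tau_1(r)x+\delta_1(r)$ (or the $y$-analogue) replaces the coefficient by its image under $\tau_1$ (resp. $\tau_2$) in the top-degree term and produces strictly shorter words in the remainder; iterating along all letters of $w$ yields $\tau^{\bf j}(r')ww'$ as the unique highest term. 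Since $\tau_1,\tau_2$ are automorphisms, $\tau^{\bf j}$ is an automorphism of $R$, hence injective.

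The main steps, in order: \textbf{(1)} Fix the total ordering on words (first by length, then alphabetically), and make precise the claim that for a word $w$ and $r\in R$ one has $wr=\tau^{\bf j(w)}(r)w+(\text{strictly shorter words with coefficients in }R)$, proved by induction on the length of $w$ using \eqref{xr=...}; here ${\bf j}(w)$ is the exponent sequence recording the pattern of $x$'s and $y$'s in $w$. \textbf{(2)} Deduce the leading-term multiplication formula: for nonzero $s,s'\in S$ with leading terms $rw$ and $r'w'$, the product $ss'$ has leading term $r\,\tau^{\bf j(w)}(r')\,ww'$, provided this coefficient is nonzero. \textbf{(3)} Observe $ww'$ is again a single word (concatenation of words is a word), so it is the unique largest word appearing in $ss'$; and $r\,\tau^{\bf j(w)}(r')\neq 0$ because $R$ is a domain and $\tau^{\bf j(w)}$ is an automorphism, hence maps the nonzero element $r'$ to a nonzero element. \textbf{(4)} Conclude $ss'\neq 0$, so $S$ has no zero divisors; since $S\neq 0$ (it contains $R$), $S$ is a domain.

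The step I expect to require the most care is \textbf{(1)}: one must check that when commuting an element of $R$ past a word $w$, the top-length term is exactly $\tau^{\bf j(w)}(r)w$ with a genuinely well-defined automorphism $\tau^{\bf j(w)}$, and that all correction terms produced by the $\delta_i$'s are supported on words strictly shorter than $w$ (so they cannot interfere with the leading term). This is a routine but slightly fiddly double induction — on the length of $w$ and, within a fixed length, peeling off the last letter. Once \eqref{xr=...} is applied to move $r$ past the final letter of $w$, the degree-preserving part carries an extra $\tau_1$ or $\tau_2$, matching the recursive definition of $\tau^{\bf j}$, and the $\delta$-part lowers the word length by one; the inductive hypothesis finishes it. After that, steps (2)--(4) are immediate, with the only substantive input being that an automorphism of a domain sends nonzero elements to nonzero elements.
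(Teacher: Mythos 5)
Your proposal is correct and follows essentially the same argument as the paper: both compute the leading term of $s s'$ as $r\,\tau^{\bf j}(r')\,ww'$ with respect to the length-then-alphabetical ordering on words, and conclude it is nonzero because $\tau^{\bf j}$ is an automorphism and $R$ is a domain. Your step (1) merely spells out the induction that the paper leaves implicit in the identity $w_I r_J = r_J' w_I + [\text{lower terms}]$.
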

\begin{proof}
Assume $s_1,s_2\in S$ are nonzero. Therefore, we may write them in the form $s_1 = r_Iw_I+[\text{lower terms}]$, $s_2 = r_Jw_J+[\text{lower terms}]$, for some $r_I,r_J\in R$ nonzero. Thus, 
\begin{equation*}s_1s_2 = r_Ir_J^\prime w_Iw_J+[\text{lower terms}],\end{equation*} 
where $r_J^\prime$ satisfies the identity $w_Ir_J=r_J^\prime w_I+[\text{lower terms}]$. Since $r_J^\prime = \tau^{\bf j}(r_J)$ for some sequence ${\bf j}$ that depends on $w_I$, this implies $r_J^\prime$ is nonzero. Hence the leading term of $s_1s_2$ is nonzero. Thus, $s_1s_2\neq 0$. 
\end{proof}

The following lemma will be useful in the proof of Theorem \ref{Rprime->Sprime}.

\begin{lemma}\label{equivalent conditions 1} 
The following are equivalent:
\begin{enumerate}[(i)]
\item $R$ is $(\tau_1,\tau_2)$-prime
\item For all nonzero $r,r^\prime\in R$, there exist $s\in R$ and a finite sequence ${\bf j}\in\mathbb{Z}^{fin. seq}$ so that $rs\tau^{\bf j}(r^\prime)\neq 0$.
\end{enumerate}
\end{lemma}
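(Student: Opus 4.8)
The plan is to prove the two implications separately, with (ii)$\Rightarrow$(i) being the more or less formal direction and (i)$\Rightarrow$(ii) being where the real work lies. For (ii)$\Rightarrow$(i), suppose $I,J$ are nonzero $(\tau_1,\tau_2)$-stable ideals of $R$; I want to show $IJ\neq 0$. Pick nonzero $r\in I$ and $r'\in J$. By (ii) there are $s\in R$ and ${\bf j}\in\mathbb{Z}^{fin.\,seq.}$ with $r s\,\tau^{\bf j}(r')\neq 0$. Now $r s\in I$ since $I$ is an ideal, and $\tau^{\bf j}(r')\in J$ since $J$ is $(\tau_1,\tau_2)$-stable and $\tau^{\bf j}$ is a composite of $\tau_1^{\pm1}$ and $\tau_2^{\pm1}$ (using that $\tau_1,\tau_2$ are \emph{automorphisms}, so stability under $\tau_i$ upgrades to stability under $\tau_i^{-1}$ for ideals — this is the one small point to check). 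Hence $0\neq r s\,\tau^{\bf j}(r')\in IJ$, so $IJ\neq 0$ and $R$ is $(\tau_1,\tau_2)$-prime.

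For (i)$\Rightarrow$(ii), the natural approach is contrapositive: assume (ii) fails, so there are nonzero $r,r'\in R$ with $r\,s\,\tau^{\bf j}(r')=0$ for every $s\in R$ and every finite sequence ${\bf j}$. I then want to build two nonzero $(\tau_1,\tau_2)$-stable ideals whose product is zero. The candidates are: $J$ := the smallest $(\tau_1,\tau_2)$-stable ideal containing $r'$, which is the ideal generated by $\{\tau^{\bf j}(r') : {\bf j}\in\mathbb{Z}^{fin.\,seq.}\}$ — manifestly nonzero and, since $\tau_1,\tau_2$ are automorphisms, genuinely $(\tau_1,\tau_2)$-stable; and $I$ := the left-annihilator-type object $\{a\in R : a R\,\tau^{\bf j}(r') = 0 \text{ for all }{\bf j}\}$, or better, the ideal generated by $r$ together with enough translates to make it two-sided and stable. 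One should check $I$ is a two-sided $(\tau_1,\tau_2)$-stable ideal: closure under right multiplication is built in, closure under left multiplication and under $\tau_1,\tau_2,\tau_1^{-1},\tau_2^{-1}$ needs the hypothesis rephrased symmetrically (applying $\tau_i$ to $a s\,\tau^{\bf j}(r')=0$ and absorbing $\tau_i$ into a new sequence). Then $r\in I$ so $I\neq0$, $r'\in J$ so $J\neq0$, and $IJ=0$ because every generator $\tau^{\bf k}(r')$ of $J$ is killed on the left by every element of $I$ (again moving the leading $\tau^{\bf k}$ across by composing sequences). This contradicts $(\tau_1,\tau_2)$-primeness.

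The main obstacle, and the step I would be most careful about, is getting the definitions of $I$ and $J$ exactly right so that all three properties — nonzero, two-sided ideal, $(\tau_1,\tau_2)$-stable — hold simultaneously, and so that $IJ=0$ really follows from the assumed family of identities $r\,s\,\tau^{\bf j}(r')=0$. The bookkeeping with the sequences ${\bf j}$ is the crux: one must observe that $\tau_i\circ\tau^{\bf j}$ and $\tau_i^{-1}\circ\tau^{\bf j}$ are again of the form $\tau^{\bf k}$ for a suitable ${\bf k}$ (including negative and zero entries), so that the set of translates $\{\tau^{\bf j}(r')\}$ is closed under $\tau_1^{\pm1},\tau_2^{\pm1}$, and correspondingly that the defining identity for $I$ is stable under composing with these automorphisms. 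Once that closure is in hand, everything else is routine ideal arithmetic. I would also remark that $R$ being a domain is \emph{not} needed for this lemma — only that $\tau_1,\tau_2$ are automorphisms — which is consistent with how the lemma is stated.
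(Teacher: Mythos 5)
Your overall strategy matches the paper's: both directions hinge on the smallest $(\tau_1,\tau_2)$-stable ideals built from the orbits of $r$ and $r^\prime$, and your contrapositive formulation of (i)$\Rightarrow$(ii) is just a rephrasing of the paper's direct argument. That direction of yours is sound: the annihilator ideal $I=\{a\in R: aR\,\tau^{\bf j}(r^\prime)=0\ \forall\,{\bf j}\}$ is indeed a two-sided $(\tau_1,\tau_2)$-stable ideal containing $r$, the orbit ideal $J$ generated by all $\tau^{\bf k}(r^\prime)$ is nonzero and stable, $IJ=0$, and the only bookkeeping needed is the observation you make, that $\tau_i^{\pm 1}\circ\tau^{\bf j}$ is again of the form $\tau^{\bf k}$.

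The genuine gap is in (ii)$\Rightarrow$(i), at exactly the point you flag as ``the one small point to check'': it is \emph{not} true that an ideal $J$ with $\tau_i(J)\subseteq J$ is automatically stable under $\tau_i^{-1}$. From $\tau_i(J)\subseteq J$ one only gets $J\subseteq\tau_i^{-1}(J)$, the wrong inclusion; for instance $R=k[x_n : n\in\mathbb{Z}]$ with $\tau(x_n)=x_{n+1}$ and $J=(x_n : n\geq 0)$ has $\tau(J)\subseteq J$ but $\tau^{-1}(J)\not\subseteq J$. Since the sequence ${\bf j}$ supplied by (ii) lies in $\mathbb{Z}^{fin.\,seq.}$ and may have negative entries, you cannot conclude $\tau^{\bf j}(r^\prime)\in J$, and the step fails as written. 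The upgrade does hold if $R$ is noetherian (the chain $J\subseteq\tau_i^{-1}(J)\subseteq\tau_i^{-2}(J)\subseteq\cdots$ stabilizes, forcing $\tau_i^{-1}(J)=J$ --- the same ACC trick the paper uses later for $(\tau_1,\tau_2,\delta_1,\delta_2)$-ideals), or if one reads ``stable'' as $\tau_i(J)=J$. The paper's own proof of this direction avoids the issue by rewriting the nonvanishing product in the form $\tau^{\bf j}(r)\,r^{\prime\prime}\,\tau^{{\bf j}^\prime}(r^\prime)\neq 0$ with ${\bf j},{\bf j}^\prime\in\mathbb{Z}_{\geq 0}^{fin.\,seq.}$, so that only forward stability of $I$ and $J$ is invoked; to repair your argument you should either arrange the nonzero product so that only nonnegative sequences act on elements of $I$ and $J$, or add one of the hypotheses above.
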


\begin{proof}
First, we assume that $R$ is $(\tau_1,\tau_2)$-prime. Let $r,r^\prime\in R$ be nonzero. The smallest $(\tau_1,\tau_2)$-ideals containing $r$ and $r^\prime$ are
\begin{align*}
&\displaystyle{I_{r^{\phantom{\prime}}}=\sum_{{\bf j}\in\mathbb{Z}_{_{\geq 0}}^{^{^{fin. seq.}}}} R(\tau^{\bf j}(r))R} & & \text{ and } & & \displaystyle{I_{r^\prime}=\sum_{{\bf j}\in\mathbb{Z}_{_{\geq 0}}^{^{^{fin. seq.}}}} R(\tau^{\bf j}(r)^\prime)R}
\end{align*}
respectively. Since $I_rI_{r^\prime}\neq 0$, we have $\tau^{\bf j}(r)r^{\prime\prime}(\tau^{\bf j^\prime}(r^\prime))\neq 0$ for some $r^{\prime\prime}\in R$ and  ${\bf j},{\bf j^\prime}\in\mathbb{Z}_{\geq 0}^{fin. seq.}$. Applying the inverse of $\tau^{\bf j}$ to both sides of $\tau^{\bf j}(r)r^{\prime\prime}(\tau^{\bf j^\prime}(r^\prime))\neq 0$ yields condition $(ii)$. Conversely, assume that condition $(ii)$ holds and let $I,J$ be nonzero $(\tau_1,\tau_2)$-ideals of $R$. Pick $r\in I$ and $r^\prime\in J$ both nonzero. There exist $r^{\prime\prime}\in R$ and finite sequences ${\bf j},{\bf j^\prime}\in\mathbb{Z}_{\geq 0}^{fin. seq.}$ so that $\tau^{\bf j}(r)r^{\prime\prime}(\tau^{\bf j^\prime}(r^\prime))\neq 0$. Since $\tau^{\bf j}(r)\in I$ and $r^{\prime\prime}\tau^{\bf j^\prime}(r^\prime)\in J$, the product $IJ$ is nonzero.
\end{proof}

\begin{theorem} If $R$ is $(\tau_1,\tau_2)$-prime, then $S$ is prime.\end{theorem}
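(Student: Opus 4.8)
The plan is to prove primeness of $S$ by showing that $aSb\neq 0$ for every pair of nonzero $a,b\in S$; this is equivalent to primeness, and in particular if $A,B$ are nonzero two-sided ideals and $0\neq a\in A$, $0\neq b\in B$, then $aSb\subseteq ASB\subseteq AB$, so $AB\neq 0$. Fix nonzero $a,b\in S$ and write them via their leading terms in the total order on words, $a=r_Iw_I+[\text{lower terms}]$ and $b=r_Jw_J+[\text{lower terms}]$, with $0\neq r_I,r_J\in R$ and $w_I,w_J$ words. I will produce $c\in S$ — a single monomial $c=r_cw_c$ suffices — making the leading term of $acb$ nonzero.

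The computation rests on the identity already used to prove that $S$ is a domain: for a word $w$ and $r\in R$, $wr=\tau^{\bf j}(r)\,w+[\text{lower terms}]$, where ${\bf j}$ is the exponent sequence of $w$ and $\tau^{\bf j}$ is an automorphism (since $\tau_1,\tau_2$ are). As $w\mapsto\tau^{{\bf j}(w)}$ respects concatenation, moving scalars to the left in $r_Iw_I\cdot r_cw_c\cdot r_Jw_J$ gives
\[
acb=r_I\,\tau^{{\bf j}(w_I)}(r_c)\,\tau^{{\bf j}(w_Iw_c)}(r_J)\;w_Iw_cw_J+[\text{lower terms}].
\]
Writing $s:=\tau^{{\bf j}(w_I)}(r_c)$, which ranges over all of $R$ as $r_c$ does, the displayed coefficient is $r_I\,s\,\tau^{{\bf j}(w_Iw_c)}(r_J)$ — precisely the kind of product controlled by Lemma \ref{equivalent conditions 1}. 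Applied to $r_I$ and $r_J$, that lemma provides $s\in R$ and a finite sequence ${\bf k}$ with $r_I\,s\,\tau^{\bf k}(r_J)\neq 0$; if the monomial $c$ can be chosen so that $\tau^{{\bf j}(w_Iw_c)}=\tau^{\bf k}$ and $r_c=(\tau^{{\bf j}(w_I)})^{-1}(s)$, then the leading term of $acb$ is nonzero, hence $acb\neq 0$, and passing back from elements to ideals finishes the proof.

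The step I expect to be hardest is exactly the requirement $\tau^{{\bf j}(w_Iw_c)}=\tau^{\bf k}$. The automorphisms realizable as $\tau^{{\bf j}(w_Iw_c)}$ are precisely those in $\tau^{{\bf j}(w_I)}\cdot M$, where $M$ is the monoid generated by $\tau_1$ and $\tau_2$; equivalently, $\tau^{\bf k}$ must be expressible with nonnegative $\tau_1,\tau_2$-exponents that prolong ${\bf j}(w_I)$, whereas Lemma \ref{equivalent conditions 1} as stated only guarantees some integer sequence. The natural way around this is to return to the nonvanishing product $I_{r_I}I_{r_J}\neq 0$ of the smallest $(\tau_1,\tau_2)$-stable ideals containing $r_I$ and $r_J$: this already furnishes $\tau^{{\bf j}_1}(r_I)\,r''\,\tau^{{\bf j}_2}(r_J)\neq 0$ with nonnegative ${\bf j}_1,{\bf j}_2$ (it is the version obtained before the normalizing step with $(\tau^{\bf j})^{-1}$ in the proof of Lemma \ref{equivalent conditions 1}), and one would then try to realize this product as the leading coefficient of $a'cb'$ for suitably enlarged $a'\in A$, $b'\in B$ and a monomial $c$. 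A last, routine point is that the bracketed lower terms never interfere: all cross-terms in the expansion of $acb$ involve strictly smaller words, so the monomial exhibited above is the genuine leading term of $acb$ as soon as its coefficient is nonzero in $R$. One could also first replace $S$ by its associated graded ring for the word-length filtration — an amalgamated product of Ore extensions with zero derivations — for which the same leading-term analysis goes through with no lower terms to track, and whose primeness implies that of $S$.
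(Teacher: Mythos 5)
You have reproduced the paper's strategy exactly: reduce primeness to $acb\neq 0$ for a single monomial $c=r_cw_c$, compute the leading coefficient of $acb$ as $r_I\,\tau^{{\bf j}(w_I)}(r_c)\,\tau^{{\bf j}(w_Iw_c)}(r_J)$, and match it against Lemma \ref{equivalent conditions 1}. You have also put your finger on precisely the right difficulty, but you have not resolved it, and neither of your sketched repairs does. Lemma \ref{equivalent conditions 1}(ii) only supplies a sequence ${\bf k}\in\mathbb{Z}^{fin.seq.}$ (it is produced by twisting a nonnegative-exponent product by $(\tau^{\bf j})^{-1}$), whereas the automorphisms realizable as $\tau^{{\bf j}(w_Iw_c)}$ form only the coset $\tau^{{\bf j}(w_I)}M$ of the monoid $M$ generated by $\tau_1,\tau_2$. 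Your ideal-level variant merely relocates the mismatch: with $a'=w'a$ and $b'=w''b$ the leading coefficient of $a'cb'$ is $\tau^{{\bf j}(w')}(r_I)\,s\,\tau^{{\bf j}(w'w_Iw_c)}\tau^{{\bf j}(w'')}(r_J)$, and matching it with $\tau^{{\bf j}_1}(r_I)\,r''\,\tau^{{\bf j}_2}(r_J)$ forces $\tau^{{\bf j}(w')}=\tau^{{\bf j}_1}$ and then again requires $\tau^{{\bf j}_2}\in\tau^{{\bf j}_1}\tau^{{\bf j}(w_I)}M$. Passing to the associated graded ring kills the lower-order terms but not this coset condition. (For what it is worth, the paper's own proof elides the identical point when it asserts that $w$ can be chosen with $s_{w,I}=\tau^{\bf j}(s)$, so you have not overlooked an ingredient the paper supplies.)

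The obstruction is not an artifact of the method. Let $R=k\cdot 1\oplus\bigoplus_{n\in\mathbb{Z}}ke_n$ with orthogonal idempotents $e_n$, let $\tau_1=\tau_2=\tau$ be the shift $e_n\mapsto e_{n+1}$, and let $\delta_1=\delta_2=0$. Every nonzero ideal of $R$ contains some $e_n$, so every nonzero $\tau$-stable ideal contains $e_m$ for all large $m$, and $R$ is $(\tau,\tau)$-prime; yet in $S$ one has $we_1=e_{1+|w|}w$ for every word $w$, hence $e_0\,rw\,e_1=e_0re_{1+|w|}w=0$ for all $r\in R$ and all $w$, so $e_0Se_1=0$ and $S$ is not prime. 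Thus no argument of this shape can succeed without an extra hypothesis guaranteeing that the automorphism in Lemma \ref{equivalent conditions 1}(ii) can be taken in the coset $\tau^{{\bf j}(w_I)}M$. A sufficient such hypothesis is that nonzero $(\tau_1,\tau_2)$-ideals satisfy $\tau_i(I)=I$ rather than merely $\tau_i(I)\subseteq I$ (automatic when $R$ is noetherian, by the ascending chain $I\subseteq\tau_i^{-1}(I)\subseteq\cdots$): then, writing $\sigma=\tau^{{\bf j}(w_I)}$ and letting $J$ be the smallest $(\tau_1,\tau_2)$-ideal containing $\sigma(r_J)$, the left annihilator of $J$ is a $(\tau_1,\tau_2)$-ideal (stability uses $\tau_i(J)=J$) that cannot contain $r_I$, which yields $u\in R$ and $\mu\in M$ with $r_I\,u\,\sigma\mu(r_J)\neq 0$; taking $r_c=\sigma^{-1}(u)$ and $w_c$ with $\tau^{{\bf j}(w_c)}=\mu$ then finishes your computation. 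You should add such a hypothesis and this intermediate lemma; as written, the proposal (like the statement itself, in this generality) does not go through.
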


\begin{proof} Suppose $f,g\in S$ and there exist nonzero $r,s\in R$ so that $f=rw_I+[\text{lower terms}]$ and $g=sw_J+[\text{lower terms}]$.  By Lemma \ref{equivalent conditions 1}, there exist $s^\prime\in R$ and a sequence ${\bf j}$ so that $rs^\prime\tau^{\bf j}(s)\neq 0$. Furthermore, for every $r^\prime\in R$ and word $w\in S$, we have
\begin{align*}
 fr^\prime wg &= (rw_I)(r^\prime w)(sw_J)+[\text{lower terms}]\\
 &=(\underbrace{rr_I^\prime s_{w,I}}_{\in R}w_Iww_J+[\text{lower terms}],
\end{align*}
where
$r_I^\prime,s_{w,I}\in R$ satisfy $w_Ir^\prime=r_I^\prime w_I+[\text{lower terms}]$ and $w_Iws=s_{w,I}w_Iw+[\text{lower terms}]$.
In particular, by choosing $r^\prime$ so that $r_I^\prime=s^\prime$ and $w$ so that $s_{w,I}=\tau^{\bf j}(s)$, it follows that $fSg\neq 0$. Hence $S$ is prime.
\end{proof}

Since $(\tau_1,\tau_2)$-primeness of a ring is weaker than the usual notion of primeness, we have

\begin{theorem}\label{Rprime->Sprime} If $R$ is prime, then $S$ is prime also.\end{theorem}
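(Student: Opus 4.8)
The plan is to deduce this immediately from the previous theorem, since the only thing that needs to be checked is that primeness of $R$ implies $(\tau_1,\tau_2)$-primeness of $R$; no further analysis of the amalgamated product $S$ is required.

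First I would recall the definitions established just before these theorems: a $(\tau_1,\tau_2)$-ideal of $R$ is, by definition, an ordinary two-sided ideal of $R$ that is moreover stable under $\tau_1$ and $\tau_2$, and $R$ is $(\tau_1,\tau_2)$-prime precisely when $0$ is a $(\tau_1,\tau_2)$-prime ideal, i.e. $IJ\neq 0$ for every pair of nonzero $(\tau_1,\tau_2)$-ideals $I,J$. The key observation is that $(\tau_1,\tau_2)$-stability is an \emph{extra} restriction on ideals, so the family of pairs of nonzero $(\tau_1,\tau_2)$-ideals is a subfamily of the family of pairs of nonzero ideals.

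Next I would argue as follows. Suppose $R$ is prime, so that $IJ\neq 0$ whenever $I$ and $J$ are nonzero two-sided ideals of $R$. In particular this holds whenever $I$ and $J$ are nonzero $(\tau_1,\tau_2)$-ideals, since those are in particular nonzero ideals. Hence $0$ is a $(\tau_1,\tau_2)$-prime ideal of $R$, that is, $R$ is $(\tau_1,\tau_2)$-prime. Applying the preceding theorem (asserting that $(\tau_1,\tau_2)$-primeness of $R$ implies primeness of $S$) then yields that $S$ is prime.

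There is essentially no obstacle to overcome here; the statement is a formal strengthening-of-hypothesis corollary of the previous result. The only point requiring care is verifying that the earlier $\eta$-prime/$\eta$-ideal formalism does give the stated implication, which it does precisely because $\eta$-stability restricts, rather than enlarges, the collection of ideals over which the defining condition is quantified.
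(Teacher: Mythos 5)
Your argument is correct and is exactly the paper's own reasoning: the paper derives this theorem as an immediate corollary of the preceding one, remarking that $(\tau_1,\tau_2)$-primeness is weaker than ordinary primeness because $(\tau_1,\tau_2)$-ideals form a subfamily of all ideals. Your more detailed verification of that implication is accurate and adds nothing that would change the approach.
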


The following theorem and the corollary that follows tell us when prime ideals of $R$ generate prime ideals in $S$.

\begin{theorem} If $I$ is a $(\tau_1,\tau_2,\delta_1,\delta_2)$-ideal of a noetherian ring $R$, then $IS=SI$ is an ideal of $S$ and, letting $\tau_1,\tau_2,\delta_1$ and $\delta_2$ denote the induced functions on $R/I$, we have
\begin{equation}S/IS\cong (R/I)\left[ x,\tau_1,\delta_1\right]*_{R/I}(R/I)\left[y,\tau_2,\delta_2\right].\end{equation}
\end{theorem}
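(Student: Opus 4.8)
The plan is to produce an explicit ring isomorphism and check it is well-defined and bijective, using the universal property of the amalgamated product on both sides. First I would set up notation: let $\bar R = R/I$, and let $\pi: R \to \bar R$ be the quotient map. The hypothesis that $I$ is a $(\tau_1,\tau_2,\delta_1,\delta_2)$-ideal means each $\tau_i$ and $\delta_i$ descends to a well-defined map on $\bar R$ (an automorphism $\bar\tau_i$ and a left $\bar\tau_i$-derivation $\bar\delta_i$ respectively — one should note $\tau_i(I) = I$ since $\tau_i$ is an automorphism, so the descended map is genuinely an automorphism), so that the Ore extensions $\bar R[x;\bar\tau_1,\bar\delta_1]$ and $\bar R[y;\bar\tau_2,\bar\delta_2]$ make sense and their amalgamated product $\bar Q := \bar R[x;\bar\tau_1,\bar\delta_1] *_{\bar R} \bar R[y;\bar\tau_2,\bar\delta_2]$ is defined.

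Next I would establish that $IS = SI$ and that this common set is a two-sided ideal. The key computation is that for $a \in I$ and $r \in R$ we have $xa = \tau_1(a)x + \delta_1(a)$ with $\tau_1(a), \delta_1(a) \in I$, and symmetrically $ya = \tau_2(a)y + \delta_2(a) \in Ix + I \subseteq IS$ wait — rather $\in xI + I$; the point is that $x\cdot I \subseteq I\cdot x + I \subseteq IS$ and $y \cdot I \subseteq IS$, and iterating over words shows $w \cdot I \subseteq IS$ for every word $w$ in $x,y$. Since words form an $R$-basis of $S$, this gives $SI \subseteq IS$; the reverse inclusion is symmetric (using that $\tau_i$ is an automorphism, so $I = \tau_i(I)$ and one can solve $ax = x\tau_i^{-1}(a) - \delta_i(\tau_i^{-1}(a))$ back into $xI + I$, hence $Ix \subseteq xI + I \subseteq SI$, etc.). Noetherianity of $R$ is what guarantees $I$ is finitely generated, but actually for the ideal statement it is not strictly needed; it is presumably invoked to be safe, or for a later finite-generation claim — I would keep it as hypothesis and not belabor the point. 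Concluding, $IS$ is a two-sided ideal, so $S/IS$ is a ring.

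Now for the isomorphism. In one direction, compose $S_1 = R[x;\tau_1,\delta_1] \to \bar R[x;\bar\tau_1,\bar\delta_1] \to \bar Q$ and $S_2 \to \bar R[y;\bar\tau_2,\bar\delta_2] \to \bar Q$; these agree on $R$ (both restrict to $\pi$ followed by the structure inclusion), so by the universal property of $S = S_1 *_R S_2$ there is a unique ring homomorphism $\Theta: S \to \bar Q$. One checks $\Theta$ kills $IS$ — indeed $\Theta(I) = \pi(I) = 0$ — so it factors through $\bar\Theta: S/IS \to \bar Q$. Conversely I would build a map $\bar Q \to S/IS$: the composite $R \to S \to S/IS$ is surjective with kernel containing $I$, hence induces $\bar R \to S/IS$; since the images of $x$ and $y$ in $S/IS$ satisfy the relations $\bar x \bar a = \bar\tau_1(a)\bar x + \bar\delta_1(a)$ for $a \in \bar R$, this extends first to the two Ore extensions and then, again by the universal property of the amalgamated product, to a homomorphism $\Psi: \bar Q \to S/IS$. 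Finally, $\bar\Theta$ and $\Psi$ are mutually inverse: it suffices to check this on the generating set $\bar R \cup \{x\} \cup \{y\}$ of $\bar Q$ and on the image of $R \cup \{x\} \cup \{y\}$ in $S/IS$, where it is immediate.

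The main obstacle — really the only subtle point — is verifying the equality $IS = SI$ cleanly, i.e. that $I$ generates the same left and right ideal, which is where the automorphism hypothesis on $\tau_1,\tau_2$ is genuinely used (a mere endomorphism would not let one push $ax$ back into $xI + I$). Everything else is a formal diagram-chase with the universal property. I would present the $IS = SI$ step as a short lemma-style paragraph using the word decomposition of $S$, and then dispatch the isomorphism by the two applications of the universal property with the verification-on-generators shortcut.
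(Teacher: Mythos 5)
There is a genuine gap at the one point where the hypotheses actually do work. You assert that $\tau_i(I)=I$ ``since $\tau_i$ is an automorphism,'' and later you dismiss the noetherian hypothesis as ``not strictly needed'' for the ideal statement. Both claims are wrong, and they are wrong at the same spot. Stability of $I$ only gives $\tau_i(I)\subseteq I$, and an automorphism can map an ideal properly into itself: take $R=k[x_i : i\in\mathbb{Z}]$, $\tau$ the shift $x_i\mapsto x_{i+1}$, and $I=(x_i : i\geq 0)$; then $\tau$ is an automorphism but $\tau(I)=(x_i: i\geq 1)\subsetneq I$. The inclusion $IS\subseteq SI$ requires pushing $ax=x\tau_1^{-1}(a)-\delta_1(\tau_1^{-1}(a))$ back into $xI+I$, which needs $\tau_1^{-1}(a)\in I$, i.e.\ $\tau_1^{-1}(I)\subseteq I$, i.e.\ the \emph{reverse} inclusion $I\subseteq\tau_1(I)$. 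This is exactly what the paper extracts from noetherianity: the ascending chain $I\subseteq\tau_i^{-1}(I)\subseteq\tau_i^{-2}(I)\subseteq\cdots$ must stabilize, so $\tau_i^{-N}(I)=\tau_i^{-N-1}(I)$ for some $N$, and applying $\tau_i^{N}$ gives $I=\tau_i^{-1}(I)$. Without that argument your proof of $IS\subseteq SI$ does not go through, and the induced map on $R/I$ need not be an automorphism either (though for the isomorphism statement an endomorphism would suffice, since Ore extensions are defined for endomorphisms).

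Apart from this, your structure matches the paper for the inclusion $SI\subseteq IS$ (which genuinely uses only stability), and your two-sided universal-property argument for the isomorphism is sound and in fact more explicit than the paper, whose proof stops after establishing $IS=SI$ and treats the isomorphism as routine. Repair the gap by replacing the false ``automorphism implies $\tau_i(I)=I$'' step with the ascending-chain argument, and restore noetherianity to its essential role rather than treating it as a safety hypothesis.
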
 

\begin{proof}
Since $I$ is $(\tau_1,\tau_2,\delta_1,\delta_2)$-stable, we have $xI\subseteq Ix+I$, and $yI\subseteq Iy+I$.  Therefore $SI\subseteq IS+I\subseteq IS$. Next, we prove $SI\supseteq IS$. We have an ascending chain of ideals of $R$:
\begin{equation*}I\subseteq\tau_i^{-1}(I)\subseteq\tau_i^{-2}(I)\subseteq\tau_i^{-3}(I)\subseteq\cdots.\end{equation*}
Thus, for some $N\in\mathbb{N}$ sufficiently large, $\tau_i^{-N}(I)=\tau_i^{-N-1}(I)$. Applying $\tau_i^{N}$ to both sides yields $I=\tau_i^{-1}(I)$. Since $rx = x\tau_1^{-1}(r)-\delta_1\tau_1^{-1}(r)$  and $ry = x\tau_2^{-1}(r)-\delta_2\tau_2^{-1}(r)$ for all $r\in R$, we have $Ix\subseteq xI+I$, and $Iy\subseteq yI+I$. Hence, $IS\subseteq SI+I\subseteq SI$.
\end{proof}

\begin{corollary}\label{prime_ideal}If $I$ is a prime $(\tau_1,\tau_2,\delta_1,\delta_2)$-ideal of a noetherian ring $R$, then $IS=SI$ is a prime ideal of $S$ and $S/IS\cong (R/I)\left[ x,\tau_1,\delta_1\right]*_{R/I}(R/I)\left[y,\tau_2,\delta_2\right].$
\end{corollary}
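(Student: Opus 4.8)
The plan is to deduce everything from the theorem just proved together with Theorem \ref{Rprime->Sprime}. That theorem already supplies both the equality $IS=SI$ and the ring isomorphism $S/IS\cong (R/I)[x,\tau_1,\delta_1]*_{R/I}(R/I)[y,\tau_2,\delta_2]$, so the only new content of the corollary is that this two-sided ideal is prime, i.e. that the quotient ring $S/IS$ is a prime ring.

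First I would record that the functions induced by $\tau_1$ and $\tau_2$ on $R/I$ are again automorphisms. This is where the noetherian hypothesis re-enters: in the proof of the preceding theorem the ascending chain $I\subseteq\tau_i^{-1}(I)\subseteq\tau_i^{-2}(I)\subseteq\cdots$ stabilizes, forcing $\tau_i^{-1}(I)=I$ and hence $\tau_i(I)=I$. Consequently the induced map $\overline{\tau}_i\colon R/I\to R/I$ is well defined, and it is bijective because $\tau_i$ is bijective and carries $I$ onto $I$; thus $\overline{\tau}_i\in\operatorname{Aut}(R/I)$, while $\overline{\delta}_i$ is a left $\overline{\tau}_i$-derivation of $R/I$. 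So the amalgamated product on the right-hand side of the isomorphism is of exactly the form to which Theorem \ref{Rprime->Sprime} applies.

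Next, since $I$ is by hypothesis a prime ideal of $R$, the quotient $R/I$ is a prime ring. Applying Theorem \ref{Rprime->Sprime} with base ring $R/I$, automorphisms $\overline{\tau}_1,\overline{\tau}_2$, and skew derivations $\overline{\delta}_1,\overline{\delta}_2$, we conclude that $(R/I)[x,\overline{\tau}_1,\overline{\delta}_1]*_{R/I}(R/I)[y,\overline{\tau}_2,\overline{\delta}_2]$ is prime. Transporting this along the isomorphism of the preceding theorem shows $S/IS$ is prime, so $IS$ is a prime ideal of $S$, as claimed.

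The argument is essentially bookkeeping; the one point that genuinely needs care — and the step I would expect a referee to scrutinize — is the verification that each $\tau_i$ descends to an \emph{automorphism} of $R/I$ rather than merely an injective endomorphism, since Theorem \ref{Rprime->Sprime} (through the earlier prime-ring results it rests on) really does require automorphisms of the base. As noted above, this is precisely where the noetherian assumption is invoked, and it is already implicit in the proof of the preceding theorem, so no additional work is required.
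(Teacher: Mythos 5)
Your argument is correct and is exactly the intended derivation: the paper gives no separate proof of the corollary, treating it as the immediate combination of the preceding theorem (which supplies $IS=SI$ and the isomorphism $S/IS\cong (R/I)[x,\tau_1,\delta_1]*_{R/I}(R/I)[y,\tau_2,\delta_2]$) with Theorem \ref{Rprime->Sprime} applied to the prime base ring $R/I$. Your observation that the noetherian hypothesis is what forces $\tau_i(I)=I$, so that the induced maps are genuine automorphisms of $R/I$, is the right point of care and is indeed already contained in the proof of the preceding theorem.
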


Next we show that certain right denominator sets in the base ring $R$ extend to right denominator sets in the amalgamated product $S$. Sometimes an appropriate localization of $S$ will reduce to studying the case where the base ring $R$ is a division ring. 

First we recall that if $X$ is a right denominator set in a ring $R$, then $X$ is also a right denominator set in the Ore extension $R[x;\tau,\delta]$ provided $\tau$ is an automorphism and $\tau(X)=X$ (see e.g. \cite[Lemma 1.4]{Goodearl}). Furthermore, the identity map on the right ring of fractions $RX^{-1}$ extends to an isomorphism of $R[x;\tau,\delta]X^{-1}$ onto $(RX^{-1})[x;\overline{\tau},\overline{\delta}]$ sending $x1^{-1}$ to $x$, where $\overline{\tau}$ and $\overline{\delta}$ denote the induced maps on $RX^{-1}$ \cite[Lemma 1.4]{Goodearl}. We have an analogous result for amalgamated products. 

\begin{theorem}\label{5555} Let $X$ be a right denominator set in $R$ such that $\tau_1(X)=\tau_2(X)=X$. Then
\begin{enumerate}[(i)]
\item $X$ is a right denominator set in $S$, and
\item $SX^{-1}\cong (R X^{-1})[x;\tau_1,\delta_1]*_{R X^{-1}}(R X^{-1})[y;\tau_2,\delta_2],$
\end{enumerate}
where $\tau_1,\tau_2,\delta_1,\delta_2$ denote the induced maps on the right ring of fractions $R X^{-1}$.
\end{theorem}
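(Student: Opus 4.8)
The plan is to establish (i) — that $X$ is a right denominator set in $S$ — directly, and then deduce (ii) formally from universal properties together with the behaviour of Ore extensions under localization quoted just before the theorem. For (i) I must verify the right Ore condition (for $a\in S$ and $c\in X$ there are $c'\in X$ and $b\in S$ with $ac'=cb$) and right reversibility (if $ca=0$ with $c\in X$ then $ac'=0$ for some $c'\in X$). Both hinge on a single computation: since $S$ is free as a left $R$-module on the words in $x,y$, for a word $W$ and $r\in R$ one has $Wr=\tau^{\bf j}(r)W+[\text{lower terms}]$, where ${\bf j}={\bf j}(W)$ is the sequence associated to $W$ as in the preamble; the crucial point is that $\tau^{\bf j}$, being a composite of $\tau_1$'s and $\tau_2$'s, is an automorphism with $\tau^{\bf j}(X)=X$. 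I will prove both conditions by induction along the well-ordered set of words (ordered by length, then alphabetically), the induction being on the leading word of the element that the denominator has to be moved past. The base case — leading word empty, i.e.\ $a\in R$ — is exactly the hypothesis that $X$ is a right denominator set in $R$; the inductive step additionally uses $\tau_i(X)=X$.

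For the Ore condition, write $a=rW+[\text{lower terms}]$ and let $c\in X$. Applying the right Ore condition in $R$ to $r,c$ gives $d\in X$ and $r_1\in R$ with $rd=cr_1$; put $c_1:=\tau^{-{\bf j}}(d)\in X$. Then $ac_1=cr_1W+h$ with $h$ of strictly smaller leading word, so the inductive hypothesis applied to $h$ yields $c_2\in X$ and $g\in S$ with $hc_2=cg$, whence $a(c_1c_2)=c(r_1Wc_2+g)$ and $c_1c_2\in X$. Reversibility is the mirror image: from $ca=0$ one reads off $cr=0$ for the leading coefficient $r$; right reversibility in $R$ gives $d'\in X$ with $rd'=0$, and setting $d:=\tau^{-{\bf j}}(d')\in X$ makes the leading term of $ad$ vanish, so $ad$ has strictly smaller leading word while still $c(ad)=0$, and the induction closes. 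I expect the main obstacle to be precisely this induction: one cannot just clear the leading coefficient and stop, because moving the denominator past $W$ leaves a tail $h$ whose coefficients are $\delta$-twists of things in $X$ and are therefore not of any directly controllable form; the scheme works only because that tail lies strictly below $W$ in the word order, so it can be absorbed by recursion — setting that recursion up correctly is the heart of the matter.

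For (ii), assume (i). By the cited localization result for Ore extensions, $S_1X^{-1}\cong(RX^{-1})[x;\tau_1,\delta_1]=:T_1$ and $S_2X^{-1}\cong(RX^{-1})[y;\tau_2,\delta_2]=:T_2$, over $RX^{-1}$; put $T:=T_1*_{RX^{-1}}T_2$, the asserted right-hand side. The composites $S_i\to S_iX^{-1}\cong T_i\hookrightarrow T$ agree on $R$, so the universal property of $S=S_1*_RS_2$ yields a ring map $S\to T$; it carries $X$ into the units of $T$, hence factors through the localization as $\theta\colon SX^{-1}\to T$. Conversely each composite $S_i\to S\to SX^{-1}$ sends $X$ to units, hence factors through $T_i=S_iX^{-1}$, and the two resulting maps $T_i\to SX^{-1}$ agree on $RX^{-1}$ (they agree on $R$, and a ring homomorphism on $R$ extends uniquely to $RX^{-1}$), so the universal property of $T$ yields $\eta\colon T\to SX^{-1}$. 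Uniqueness in all the universal properties forces $\theta$ and $\eta$ to be mutually inverse; by construction the resulting isomorphism restricts to the identity on $RX^{-1}$ and sends $x1^{-1}\mapsto x$ and $y1^{-1}\mapsto y$.
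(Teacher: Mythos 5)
Your argument is correct, but it takes a genuinely different route from the paper's. The paper never verifies the Ore or reversibility axioms in $S$ at all: it works entirely inside the target ring $T:=(RX^{-1})[x;\tau_1,\delta_1]*_{RX^{-1}}(RX^{-1})[y;\tau_2,\delta_2]$, derives the commutation identities for $m^{-1}$ ($m\in X$) past $x$ and $y$, and shows that every element of $T$ can be brought to the form $sm^{-1}$ with $s\in S$, $m\in X$; exhibiting $T$ as a right ring of fractions of $S$ with respect to $X$ then delivers (i) and (ii) simultaneously, since a right ring of fractions exists only when $X$ is a right denominator set. You instead prove (i) head-on, by induction along the word order, using that $\tau^{\bf j}(X)=X$ for every composite $\tau^{\bf j}$ to twist the denominator produced in $R$ back past the leading word, and absorbing the $\delta$-tail by recursion; you then obtain (ii) purely formally from the universal properties of $S_1*_RS_2$, of $T_1*_{RX^{-1}}T_2$, and of Ore localization. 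Your route is longer but more complete: the paper's proof leaves the reversibility condition and the kernel condition for $S\to T$ (that $s1^{-1}=0$ forces $sm=0$ for some $m\in X$) unaddressed, whereas your induction supplies exactly the missing verification of the denominator-set axioms; what the paper's approach buys in exchange is brevity and an explicit normal form $\sum_i r_iw_im_i^{-1}$ for elements of $SX^{-1}$. One small point of care in your write-up: the induction is strong induction over the well-ordered set of words (finitely many of each length, so order type $\omega$), and the inductive hypothesis must be quantified over all $c\in X$, since the tail $h$ is handled with the same $c$ but a new denominator; as stated your argument does this correctly.
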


\begin{proof}  Since $X$ is a right denominator set in $R$, then the right ring of fractions $RX^{-1}$ exists and we have
\begin{equation*}(R X^{-1})[x;\tau_1,\delta_1]*_{R X^{-1}}(R X^{-1})[y;\tau_2,\delta_2]=\sum_IRX^{-1}w_I.\end{equation*}
In the amalgamated product ring $(R X^{-1})[x;\tau_1,\delta_1]*_{R X^{-1}}(R X^{-1})[y;\tau_2,\delta_2]$ we have the following identities for every $m\in X$:
\begin{align*}
m^{-1}x &=x(\tau_1^{-1}(m))^{-1}+m^{-1}(\delta_1\tau_1^{-1}(m))(\tau_1^{-1}(m))^{-1},\\
m^{-1}y &=y(\tau_2^{-1}(m))^{-1}+m^{-1}(\delta_2\tau_2^{-1}(m))(\tau_2^{-1}(m))^{-1}.
\end{align*}
The two equations above show how $m^{-1}$ commutes past the variables $x$ and $y$. Thus, every element in $(R X^{-1})[x;\tau_1,\delta_1]*_{R X^{-1}}(R X^{-1})[y;\tau_2,\delta_2]$ can be written as a sum of the form $\sum_ir_iw_im_i^{-1}$.
Since $X$ is a right Ore set in $R$, then for every $r_1,r_2\in R$, $m_1,m_2\in X$, and $w_1,w_2$ words in the letters $x,y$, there exist $r_3\in R$ and $m_3\in X$ so that
\begin{align*}
r_1w_1m_1^{-1}+r_2w_2m_2^{-1}&=(r_1w_1m_1^{-1}m_2+r_2w_2)m_2^{-1}\\
&=(r_1w_1r_3m_3^{-1}+r_2w_2)m_2^{-1}\\
&=(r_1w_1r_3+r_2w_2m_3)m_3^{-1}m_2^{-1}\\
&=(r_1w_1r_3+r_2w_2m_3)(m_2m_3)^{-1}\in S(m_2m_3)^{-1}.
\end{align*}
Therefore, right common denominators exist. Hence any $f\in (R X^{-1})[x;\tau_1,\delta_1]*_{R X^{-1}}(R X^{-1})[y;\tau_2,\delta_2]$ can be written in the form $sm^{-1}$ for some $s\in S$ and $m\in X$.  
\end{proof}

We end this section by remarking that if $R$ is a noetherian domain, then the set $X:=R\backslash\{0\}$ satisfies the conditions in Theorem \ref{5555}. In this case, the right ring of fractions $SX^{-1}$ is isomorphic to an amalgamated product of Ore extensions over the division ring $RX^{-1}$.

\subsection{Amalgamated Products of Quadratic Extensions}\label{Quadratic Extensions}

We call an extension $R\subseteq A$ of rings \emph{quadratic} if there exists $x\in A\backslash R$ so that  $A$ is a free left $R$-module with basis $\{1,x\}$. Since $xR\subseteq Rx+R$, it follows that there exists a ring endomorphism $\tau:R\to R$ and a left $\tau$-derivation $\delta:R\to R$ so that $xr=\tau(r)x+\delta(x)$ for every $r\in R$. Furthermore $x^2=ax+b$ for some $a,b\in R$. Thus $A$ is isomorphic to the factor ring $R[x;\tau,\delta]/\left<x^2-ax-b\right>$. However there are certain compatibility conditions involving the elements $a,b\in R$ and the endomorphism $\tau:R\to R$ that must hold. To make this precise we observe that for every $p\in R[x;\tau,\delta]$ there exist unique $r_0,r_1\in R$ and $f\in R[x;\tau,\delta]$ so that $p=r_0+r_1x+f(x^2-ax-b)$. This implies that for every $r\in R$, the following identities hold in $R[x;\tau,\delta]$:
\begin{align}
&(x^2-ax-b)r= \tau^2(r)(x^2-ax-b),\label{s1}\\
&(x^2-ax-b)x=(x+\tau(a)-a)(x^2-ax-b).\label{s2}\end{align}
If $\tau$ is an automorphism, Eqns. \ref{s1}-\ref{s2} are equivalent to $x^2-ax-b$ being a normal element in $R[x;\tau,\delta]$.

Throughout this section, let $Q_1$ and $Q_2$ be arbitrary quadratic extensions of $R$. We will write them in the form $R[x;\tau_1,\delta_1]/I_1$ and $R[y;\tau_2,\delta_2]/I_2$ respectively, where $I_1=\left<x^2-ax-b\right>$, $I_2=\left<y^2-cy-d\right>$, and the triples $(a,b,\tau_1)$ and $(c,d,\tau_2)$ satisfy compatibility conditions analogous to those described in Eqns.\ref{s1}-\ref{s2} above. For an element $f+I_i \in Q_i$ ($i=1,2$), we write it simply as $f$. Let $Q=Q_1*_RQ_2$.
\\

In this section, $\tau_1$ and $\tau_2$ do not necessarily need to be automorphisms of $R$. Unless stated otherwise, they are only assumed to be endomorphisms.
\\

Let $\ell\geq 0$. We define the following alternating-letter words of length $\ell$ in $Q$:
\begin{align*}
&x^{(\ell)}=xyxyx\cdots, & &y^{(\ell)}=yxyxy\cdots, & &\widehat{x}^{(\ell)}=\cdots xyxyx, & &\widehat{y}^{(\ell)}=\cdots yxyxy.
\end{align*}

\begin{theorem}
The ring $Q$ is a free left $R$-module with basis $\{1,x^{(1)},y^{(1)},x^{(2)},y^{(2)},...\}$ (or equivalently $\{1,\widehat{x}^{(1)},\widehat{y}^{(1)},\widehat{x}^{(2)},\widehat{y}^{(2)},...\}$).
\end{theorem}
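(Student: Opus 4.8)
Throughout, write ``alternating word'' for an element of the displayed set. There are two things to establish: (a) the alternating words span $Q$ as a left $R$-module, and (b) they are left $R$-linearly independent. The second listing $\{1,\widehat{x}^{(\ell)},\widehat{y}^{(\ell)},\dots\}$ requires no separate argument: comparing the definitions one has $\widehat{x}^{(\ell)}=x^{(\ell)}$, $\widehat{y}^{(\ell)}=y^{(\ell)}$ for odd $\ell$ and $\widehat{x}^{(\ell)}=y^{(\ell)}$, $\widehat{y}^{(\ell)}=x^{(\ell)}$ for even $\ell$, so the two candidate bases coincide as sets. For (a) I would argue by strong induction on the length $|w|$ of a word $w$ in $x,y$ that $w$ lies in $\sum Rv$, the sum over alternating words $v$ with $|v|\le|w|$. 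If $|w|\le 1$ or $w$ is already alternating there is nothing to prove; otherwise $w=uzzv$ for some letter $z\in\{x,y\}$ and (possibly empty) words $u,v$, so applying $x^2=ax+b$ or $y^2=cy+d$ (valid in $Q$) rewrites $w$ as $u(ax+b)v$ or $u(cy+d)v$, and pushing the scalars $a,b$ (resp. $c,d$) leftward past $u$ via $xr=\tau_1(r)x+\delta_1(r)$, $yr=\tau_2(r)y+\delta_2(r)$ expresses $w$ as a left $R$-combination of strictly shorter words, to which induction applies.

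For (b) --- the substantive part --- I would build an explicit left $Q$-module on which the alternating words act freely from a single vector. Let $E$ be the free left $R$-module on symbols $\epsilon_0$ together with $\epsilon_\ell^x,\epsilon_\ell^y$ for $\ell\ge 1$, with the convention $\epsilon_0^x=\epsilon_0^y:=\epsilon_0$, and define an additive operator $X\colon E\to E$ (well defined and additive since $E$ is the corresponding direct sum and $\tau_1,\delta_1$ are additive) by, for $r\in R$,
\begin{align*}
X(r\epsilon_\ell^y)&=\tau_1(r)\epsilon_{\ell+1}^x+\delta_1(r)\epsilon_\ell^y\qquad(\ell\ge 0),\\
X(r\epsilon_\ell^x)&=\bigl(\tau_1(r)a+\delta_1(r)\bigr)\epsilon_\ell^x+\tau_1(r)b\,\epsilon_{\ell-1}^y\qquad(\ell\ge 1),
\end{align*}
and define $Y$ by the same formulas with the superscripts $x,y$ interchanged and $(\tau_1,\delta_1,a,b)$ replaced by $(\tau_2,\delta_2,c,d)$. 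A routine check, using only that the $\tau_i$ are ring endomorphisms and the $\delta_i$ are $\tau_i$-derivations, shows $X(rv)=\tau_1(r)X(v)+\delta_1(r)v$ and $Y(rv)=\tau_2(r)Y(v)+\delta_2(r)v$ for all $r\in R$, $v\in E$; since a left module over $R[x;\tau_1,\delta_1]$ is nothing but a left $R$-module equipped with such an operator, $E$ becomes a left $R[x;\tau_1,\delta_1]$-module ($x$ acting as $X$) and a left $R[y;\tau_2,\delta_2]$-module ($y$ acting as $Y$), each restricting to the given $R$-action. The one real verification is that $X^2=aX+b$ and $Y^2=cY+d$ as operators on $E$: on $\epsilon_0$ and on the $\epsilon_\ell^y$ this is immediate from the formulas, and on $\epsilon_\ell^x$ it unwinds to the two identities
\[
\tau_1(a)a+\delta_1(a)+\tau_1(b)=a^2+b,\qquad \tau_1(a)b+\delta_1(b)=ab,
\]
which hold precisely because $Q_1$ is a quadratic extension --- they are what one gets by computing $x^3$ in $Q_1$ both as $x\cdot x^2$ and as $x^2\cdot x$ and equating coefficients in the basis $\{1,x\}$ (equivalently, they are the relations \eqref{s2} rewritten in that basis); the analogous identities for $c,d,\tau_2,\delta_2$ give $Y^2=cY+d$. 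Hence $E$ descends to a left $Q_1$-module and a left $Q_2$-module whose two restricted $R$-actions coincide, so by the universal property of the amalgamated product (Definition~\ref{def: amalgamated product}, applied with $Q_1,Q_2$ in place of $S_1,S_2$ and $S'=\operatorname{End}_{\mathbb Z}(E)$) $E$ becomes a left $Q$-module with $x$ acting as $X$ and $y$ as $Y$.

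To finish, a simultaneous induction on $\ell$ using $x^{(\ell+1)}=x\,y^{(\ell)}$ and $y^{(\ell+1)}=y\,x^{(\ell)}$ gives $x^{(\ell)}\cdot\epsilon_0=\epsilon_\ell^x$ and $y^{(\ell)}\cdot\epsilon_0=\epsilon_\ell^y$ for every $\ell\ge 0$. Thus any relation $r_0+\sum_{\ell\ge 1}(r_\ell x^{(\ell)}+s_\ell y^{(\ell)})=0$ in $Q$ (with all but finitely many $r_\ell,s_\ell\in R$ zero), on being applied to $\epsilon_0$, becomes $r_0\epsilon_0+\sum_{\ell\ge 1}(r_\ell\epsilon_\ell^x+s_\ell\epsilon_\ell^y)=0$ in the free module $E$, forcing every coefficient to vanish; combined with (a) this proves $Q$ is free on the alternating words. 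I expect the only delicate point to be the verification $X^2=aX+b$, $Y^2=cY+d$ --- the sole place where the hypothesis that $Q_1$ and $Q_2$ are quadratic extensions (encoded in the compatibility relations tying $a,b$ to $\tau_1$ and $c,d$ to $\tau_2$) actually gets used; everything else is bookkeeping, and in particular none of it needs $\tau_1,\tau_2$ to be automorphisms.
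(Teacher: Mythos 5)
Your proof is correct and follows essentially the same route as the paper: spanning via the quadratic and commutation relations, and independence by constructing the free left $R$-module on symbols indexed by the alternating words, letting $x$ and $y$ act so as to mimic left multiplication, and evaluating a putative relation at the generator $\epsilon_0$. Your version is somewhat more explicit than the paper's (which gives the same operators in a single closed formula and leaves the verification of $X^2=aX+b$, $Y^2=cY+d$ to the reader), and you correctly identify that this verification reduces exactly to the compatibility condition \eqref{s2}.
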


\begin{proof}
The relations $x^2=ax+b$, $y^2=cy+d$, $xr=\tau_1(r)x+\delta_1(r)$, $yr=\tau_2(r)y+\delta_2(r)$ ($r\in R$) imply that $\{1,x^{(1)},y^{(1)},x^{(2)},y^{(2)},...\}$ is a spanning set for $Q$. We need to prove $R$-linear independence. Let $A$ be the free left $R$-module having basis $\{1,f_1,f_2,...,g_1,g_2,...\}$ and set $E:=\text{End}_{\mathbb{Z}}(A)$. We view $E$ as a left $Q$-module by first having each $r\in R$ act via left multiplication. For 
$\displaystyle{p=c_0+\sum_{i>0}\left(c_if_i+d_ig_i\right)\in A}$, we define the actions of $x$ and $y$ by
\begin{align*}
x.p&:=\delta_1(c_0)+\tau_1(c_1)b+\Big(\tau_1(c_0)+\tau_1(c_1)a+\delta_1(c_1)\Big)f_1\\
&\phantom{===}+\sum_{i>0}\Big(\tau_1(d_i)+\delta_1(c_{i+1})+\tau_1(c_{i+1})a\Big)f_{i+1}+\Big(\delta_1(d_i)+\tau_1(c_{i+1})b\Big)g_i,\\
y.p&:=\tau_2(d_1)d+\delta_2(c_0)+\sum_{i>0}\Big(\delta_2(c_i)+\tau_2(d_{i+1})d\Big)f_i+\Big(\tau_2(c_{i-1})+\tau_2(d_{i})c+\delta_2(d_{i})\Big)g_{i}.
\end{align*}

One can verify that, as operators on $E$, we have $xr=\tau_1(r)x+\delta_1(r)$, $yr=\tau_2(r)y+\delta_2(r)$, $x^2=ax+b$, and $y^2=cy+d$. Therefore, these actions define a left $Q$-module structure on $E$. If $r_0+\sum_{i>0}r_ix^{(i)}+r_i^\prime y^{(i)}=0\in Q$ for some $r_i,r_i^\prime\in R$, then $(r_0+\sum_{i>0}r_ix^{(i)}+r_i^\prime y^{(i)}).1=r_0+\sum_{i>0} r_if_i+r_i^\prime g_i=0$. This implies $r_0,r_1,r_2,...,r_1^\prime,r_2^\prime,...$ are all $0$.  Thus $1,x^{(1)},y^{(1)},x^{(2)},y^{(2)},...$ are left $R$-linearly independent in $Q$.
\end{proof}

We remark that if $\tau_1$ and $\tau_2$ are automorphisms, then $Q$ is also a free right $R$-module with basis $\{1,x^{(1)},y^{(1)},x^{(2)},y^{(2)},...\}$. 

Next we prove an analogue of the Hilbert Basis theorem for $Q$. In the proof we make use of leading coefficients and leading terms.  However, in contrast to an ordinary polynomial ring, some elements of $Q$ can potentially have two leading coefficients instead of one. For instance if $p=\sum_{i=0}^{n}a_ix^{(i)}+b_iy^{(i)}\in Q$ ($a_i,b_i\in R$) with $a_n,b_n$ not both zero, then we say $p$ has degree $n$ (or $\text{deg}(p)=n$ for short). We call $a_nx^{(n)}+b_ny^{(n)}$ the \emph{leading term} of $p$. 

\begin{theorem}\label{Hilbert} If $R$ is right (left) noetherian and $\tau_1,\tau_2$ automorphisms of $R$, then $Q$ is right (left) noetherian.
\end{theorem}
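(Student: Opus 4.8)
The plan is to run a noncommutative Hilbert‑basis argument keyed to leading terms. Fix a right ideal $J\subseteq Q$. Writing elements of $Q$ in the basis $\{1,x^{(1)},y^{(1)},x^{(2)},y^{(2)},\dots\}$, for each $n\ge 1$ let $L_n(J)\subseteq R\oplus R$ be the set of pairs $(a,b)$ such that some $p\in J$ of degree $\le n$ has degree‑$n$ component $ax^{(n)}+by^{(n)}$, and set $L_0(J)=J\cap R$. Each $L_n(J)$ is an additive subgroup, and moving $r\in R$ past $x^{(n)}$ (resp.\ $y^{(n)}$) multiplies the corresponding coefficient by $\tau^{\mathbf j}(r)$ for the automorphism attached to that word, so $L_n(J)$ is stable under the twisted right action $(a,b)\cdot r=(a\,\sigma_n(r),\,b\,\sigma'_n(r))$ for suitable $\sigma_n,\sigma'_n$. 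Because $\tau_1,\tau_2$ are automorphisms, $\sigma_n,\sigma'_n$ are automorphisms, so the submodule lattice of $R\oplus R$ under this twisted action coincides, via the coordinatewise change of variables $(a,b)\mapsto(\sigma_n^{-1}(a),{\sigma'_n}{}^{-1}(b))$, with that of the ordinary module $R\oplus R$; since $R$ is right noetherian, each $L_n(J)$ is therefore finitely generated as an $R$‑module under this action.

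The first key step is to relate consecutive $L_n(J)$. Because the leading coefficients of an element sit at the far left, right multiplication by $x$ or by $y$ carries a leading pair up one degree with no twist at all: if $p\in J$ has degree‑$n$ component $ax^{(n)}+by^{(n)}$, then — checking the two parities of $n$ separately and using $x^2=ax+b$, $y^2=cy+d$ — one of $px,py$ has degree‑$(n{+}1)$ component $ax^{(n+1)}$ and the other has $by^{(n+1)}$. Hence $L_n(J)\subseteq L_{n+1}(J)$, so $L_0(J)\subseteq L_1(J)\subseteq\cdots$; consequently the coordinate projections $\pi_1(L_n(J))$ and $\pi_2(L_n(J))$ are ascending chains of right ideals of $R$ (they are right ideals because $\sigma_n,\sigma'_n$ are automorphisms). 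As $R$ is right noetherian these chains stabilize; fix $N$ beyond all their stabilization points (and $\ge 1$).

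For each $n$ with $0\le n\le N$ choose finitely many $p_{n,1},\dots,p_{n,k_n}\in J$ of degree $\le n$ whose leading pairs generate $L_n(J)$ as a module under the twisted action (for $n=0$, generators of the right ideal $J\cap R$). Let $G$ be the finite set of all these $p_{n,i}$; I claim $J=\langle G\rangle$. Given $p\in J$ of degree $m$, induct on $m$; the case $m=0$ is immediate. If $1\le m\le N$, express the leading pair of $p$ as a twisted $R$‑combination of the leading pairs of the $p_{m,i}$ and subtract the corresponding combination of the $p_{m,i}$ from $p$: the result lies in $J$, differs from $p$ by an element of $\langle G\rangle$, and has degree $<m$. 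If $m>N$, each coordinate of the leading pair of $p$ lies in $\pi_1(L_N(J))=\sum_i a_{N,i}R$ (resp.\ $\pi_2(L_N(J))=\sum_i b_{N,i}R$) by stabilization; multiplying the witnesses $p_{N,i}$ on the right by a suitable alternating word of length $m-N$ yields elements of $J$ of degree $\le m$ whose leading pairs are $(a_{N,i},0)$ and $(0,b_{N,i})$ (again no twist enters, by the computation above), and subtracting an appropriate right‑$R$‑combination of these from $p$ drops the degree while changing $p$ only by an element of $\langle G\rangle$. Induction yields $J=\langle G\rangle$, so $Q$ is right noetherian; the left‑noetherian statement follows by the mirror argument, using that $Q$ is also a free right $R$‑module on $\{1,x^{(1)},y^{(1)},\dots\}$ when $\tau_1,\tau_2$ are automorphisms.

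The main obstacle is the interplay of the two features distinguishing $Q$ from a polynomial ring. First, an element of $Q$ has a pair of leading coefficients and these must be cleared simultaneously: clearing them one at a time fails, since a witness chosen to kill one coefficient can reintroduce the other, producing an infinite regress — this is exactly why the argument is run with submodules of $R\oplus R$ rather than with ideals of $R$. Second, since $\tau_1$ and $\tau_2$ need not commute, the automorphisms $\sigma_n,\sigma'_n$ twisting the two coordinates are genuinely different, and no single substitution reduces everything to the untwisted situation; the computation that right multiplication by $x$ and $y$ shifts leading data up without twisting is precisely what still allows honest ascending chains of right ideals of $R$ to be extracted from the $L_n(J)$, which is the crux of the proof.
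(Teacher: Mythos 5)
Your proof is correct, and while it is the same genre of argument as the paper's (induction on degree, clearing leading terms against a finite set of witnesses), the organizing device is genuinely different. The paper works with two right ideals of $R$ aggregated over all degrees: $L_1$ consists of leading coefficients of elements of $I$ whose leading term is a \emph{pure} $rx^{(i)}$, while $L_2$ consists of the $y^{(i)}$-leading coefficients of arbitrary elements; it pushes witnesses to a common degree $N$, handles degrees below $N$ via noetherianity of the finitely generated module $M=R+x^{(1)}R+\cdots+y^{(N-1)}R$, and then clears the two leading coefficients \emph{sequentially} --- first the $y^{(m)}$-coefficient using the $L_2$-witnesses, after which the residue has a pure $x^{(m)}$ leading term whose coefficient lies in $L_1$ and can be cleared without reintroducing a $y$-term. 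You instead build a degree-indexed ascending chain of twisted submodules $L_n(J)\subseteq R\oplus R$, prove $L_n\subseteq L_{n+1}$ via the (correct) observation that right multiplication by $x$ and $y$ sends the pair $(a,b)$ to $(a,0)$ and $(0,b)$ one degree up with no twist, and stabilize the coordinate projections; this is closer to the textbook Hilbert basis proof and clears both coefficients at once. Each version buys something: yours avoids the paper's asymmetric definitions and the separate $I\cap M$ step, while the paper's avoids twisted module structures on $R\oplus R$ entirely. One small correction to your closing commentary: clearing the coefficients one at a time does \emph{not} inherently fail --- the asymmetric definition of $L_1$ (pure $x$-leading terms only) is precisely what blocks the infinite regress you describe, and that is the route the paper takes.
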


\begin{proof}
First, let us suppose $R$ is right noetherian. Let $I$ be a right ideal of $Q$ and define
\begin{align*}
&L_1:=\{0\}\cup\{r\in R\mid\exists p\in I\text{ with leading term }rx^{(i)}\text{ for some }i\in\mathbb{N}\},\\
&L_2:=\{0\}\cup\{r\in R\mid\exists p\in I\text{ with leading term } a_ix^{(i)}+ry^{(i)} \text{ for some }a_i\in R,i\in\mathbb{N}\},
\end{align*}
First, we show $L_1$ is a right ideal of $R$. The proof that $L_2$ is a right ideal of $R$ is similar. Assume $\Lambda,\Lambda^\prime\in L_1$ are nonzero. Thus, there exist $p_\Lambda,p_{\Lambda^{\prime}}\in I$ having the form
\begin{equation*}p_{\Lambda^{\phantom{\prime}}} = \Lambda^{\phantom{\prime}}x^{(i)^{\phantom{\prime}}} +[\text{lower degree terms}],\end{equation*}
\begin{equation*}p_{\Lambda^{\prime}} = \Lambda^\prime x^{(i^\prime)} +[\text{lower degree terms}].\end{equation*}
Without any loss of generality, assume $i\leq i^\prime$. 

For every $m\geq\ell$, we let $x_{\ell,m},y_{\ell,m}\in Q$ be the unique alternating-letter words of length $m-\ell$ that satisfy the conditions $x^{(\ell)}x_{\ell,m}=x^{(m)}$ and $y^{(\ell)}y_{\ell,m}=y^{(m)}$. If $\Lambda+\Lambda^\prime =0$, then $\Lambda+\Lambda^\prime\in L_1$. On the other hand if $\Lambda+\Lambda^\prime\neq 0$, then $p_\Lambda x_{i,i^\prime}+p_{\Lambda^\prime}\in I$ has leading term $(\Lambda+\Lambda^\prime)x^{(i^\prime)}$ and it follows that $\Lambda+\Lambda^\prime\in L_1$. Furthermore, for any $r\in R$, we have $\Lambda r\in L_1$ because $$p_\Lambda(\tau^{{(i) }})^{-1}(r)  = \Lambda rx^{(i)}+[\text{lower degree terms}]\in I,$$
where $\displaystyle{\tau^{ (i) }=\underbrace{\tau_1\tau_2\tau_1\tau_2\tau_1\cdots}_{i \text{ terms}}}.$ (We define $\tau^{ (m)}$ similarly for every $m\in\mathbb{N}$.)

Therefore $L_1$ is a right ideal of $R$, hence finitely generated. Suppose $L_1=r_1R+\cdots +r_lR$ and $L_2=s_1R+\cdots +s_tR$. By multiplying on the right by appropriate words in $x$ and $y$, we find that for every $1\leq i\leq l$, $1\leq j\leq t$, there exist $f_i,g_j\in I$ having the form
\begin{align*}
&f_i = r_ix^{(N)} +\text{[lower degree terms]}\\
&g_j = a_jx^{(N)}+s_jy^{(N)} +\text{[lower degree terms]}
\end{align*}
for some $N\in\mathbb{N} $ sufficiently large and some $a_j\in R$. 

Next, define $M:=R+x^{(1)}R+y^{(1)}R+\cdots +x^{(N-1)}R+y^{(N-1)}R$. Since $M$ is a finitely generated right (and left) $R$-module, it follows that $M$ is noetherian. Thus, the submodule $(I\cap M)_R\subseteq M_R$ is finitely generated. Suppose $I\cap M=c_1R+\cdots +c_dR$.

Let $I_0$ be the right ideal of $Q$ generated by $f_1,...,f_l,g_1,...,g_t,c_1,...,c_d$. We show $I=I_0$. From the definition of $I_0$, it follows that $I_0\subseteq I$. If $p\in I$ and $\text{deg}(p)<N$, then $p\in I\cap M\subseteq I_0$. Thus, we suppose $p\in I$, $\text{deg}(p)=m\geq N$, and everything in $I$ having degree less than $m$ is in $I_0$. Let us assume
\begin{equation*}p=rx^{(m)} + sy^{(m)} +\text{[lower degree terms]}\in I\end{equation*}
for some $r,s\in R$. Thus, there exist $b_1,...,b_t\in R$ so that $s=s_1b_1+\cdots s_tb_t$.  Next, put 
\begin{equation*}u=\left(g_1\left(\tau_1^{-1}\tau^{ (N+1)}\right)^{-1}(b_1)+\cdots +g_t\left(\tau_1^{-1}\tau^{ (N+1)}\right)^{-1}(b_t)\right)y_{N,m}\in I_0.\end{equation*}
It follows that $u$ has the form $u=r^\prime x^{(m)}+sy^{(m)} +\text{[lower degree terms]}$ for some $r^\prime\in R$. Therefore 
\begin{equation*}p-u= (r-r^\prime )x^{(m)} +\text{[lower degree terms]}.\end{equation*}
Thus $r-r^\prime = r_1d_1+\cdots r_ld_l$ for some $d_1,...,d_l\in R$. Now define
\begin{equation*}u^\prime:=\left(f_1\left(\tau^{ (N)}\right)^{-1}(d_1)+\cdots f_l\left(\tau^{ (N)}\right)^{-1}(d_l)\right)x_{N,m}\in I_0.\end{equation*}
It follows that $u^\prime=(r-r^\prime )x^{(m)} +\text{[lower degree terms]}$. Therefore $p-u-u^\prime$ has degree less than $m$, hence $p-u-u^\prime\in I_0$. Therefore $p\in I_0$.

Now assume $R$ is left noetherian. To prove $Q$ is left noetherian, the argument is similar except now we construct left ideals $L_1$, $L_2$ of $R$ by writing polynomials in $Q$ with \emph{right-hand} coefficients and use the \emph{right} $R$-basis $\{1,\widehat{x}_1,\widehat{y}_1,\widehat{x}_2,\widehat{y}_2,....\}$ of $Q_R$.
\end{proof}

\begin{definition} Let $\tau$ be an endomorphism of a ring $R$. A left $\tau$-derivation $\delta$ is \emph{inner} if there exists $s\in R$ so that $\delta (r)=\tau(r)s-sr$ for all $r\in R$.
\end{definition}

\begin{theorem}\label{division} Suppose 
\begin{enumerate}[(i)]
\item $R$ is a division ring, 
\item $\tau_1$ and $\tau_2$ are automorphisms, and 
\item neither $\delta_i$ is an inner $\tau_i$-derivation.
\end{enumerate}
Then $Q$ is a principal ideal ring.
\end{theorem}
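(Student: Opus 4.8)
The plan is to reduce to one-sided ideals and then run a division algorithm in the degree filtration that is sharp enough to produce a single generator. By the left--right symmetry already exploited in Theorem \ref{Hilbert} (work with right-hand coefficients and the right $R$-basis $\{1,\widehat x^{(1)},\widehat y^{(1)},\dots\}$), it suffices to prove that every right ideal $I$ of $Q$ is principal.

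First I would fix the bookkeeping on leading terms: for $0\ne p\in Q$ write $\mathrm{lt}(p)=rx^{(m)}+sy^{(m)}$ with $m=\deg p$ and $(r,s)\ne(0,0)$. From $x^{2}=ax+b$, $y^{2}=cy+d$, $xr=\tau_1(r)x+\delta_1(r)$, $yr=\tau_2(r)y+\delta_2(r)$ and the fact that $\tau_1,\tau_2$ are automorphisms, two moves are available inside $I$: (a) a \emph{shift} -- right-multiplying a degree-$m$ element by an alternating word of length $\ell$ raises the degree to $m+\ell$ and transports the leading coefficients through the alternating composites $\tau^{(\cdot)},\widetilde\tau^{(\cdot)}$ of $\tau_1,\tau_2$; and (b) a \emph{collapse} -- multiplying a word that already ends in $z\in\{x,y\}$ by $z$ triggers the relevant quadratic relation and drops the degree of that summand. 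Combining moves (a) and (b) shows that from one element of $I$ with leading term $rx^{(m)}+sy^{(m)}$, $rs\ne 0$, one produces inside $I$ elements of degree $m+1$ with \emph{pure} leading terms $r'x^{(m+1)}$ and $s'y^{(m+1)}$, $r',s'\ne 0$.

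Now the reduction. If $I\cap R\ne 0$ then $1\in I$ and $I=Q$, so assume $I\cap R=0$ and set $n=\deg I\ge 1$. Let $\mathcal L_n\subseteq Rx^{(n)}\oplus Ry^{(n)}$ be the set of leading terms of degree-$n$ elements of $I$, together with $0$; it is closed under addition, under negation, and under the twisted right $R$-action $rx^{(n)}+sy^{(n)}\mapsto r\tau^{(n)}(\alpha)x^{(n)}+s\widetilde\tau^{(n)}(\alpha)y^{(n)}$ of move (a). A short argument over the division ring $R$ shows that $\mathcal L_n$ is either all of $Rx^{(n)}\oplus Ry^{(n)}$, or equals $\{\mathrm{lt}(p_0\alpha):\alpha\in R\}$ for a single $p_0\in I$ of degree $n$. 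In the second case a descent finishes: for $0\ne q\in I$ of degree $m\ge n$ one subtracts a suitable element of $p_0Q$ of degree $m$ having the same leading term as $q$ (at degree $n$ this is just $p_0\alpha$; for $m>n$ one first passes to degree $n+1$, where by the previous paragraph $p_0Q$ already contains elements with pure $x$- and pure $y$-leading terms, and then climbs back up by shifts), and induces on the degree; hence $I=p_0Q$.

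So everything reduces to ruling out the first case, and this is where the hypothesis that neither $\delta_i$ is $\tau_i$-inner is used -- the main obstacle. If $\mathcal L_n=Rx^{(n)}\oplus Ry^{(n)}$, then (closure under the above operations, $R$ a division ring) $I$ contains $p,p'$ of degree $n$ with $\mathrm{lt}(p)=x^{(n)}$, $\mathrm{lt}(p')=y^{(n)}$. I would derive a contradiction by manufacturing a nonzero element of $I$ of degree $<n$: right-multiply $p$ and $p'$ by elements of $R$ and by $x,y$ and cancel the common leading terms degree by degree; the residual obstruction to completing the cancellation is a $\mathbb Z$-linear map $R\to R$ built from $\delta_1,\delta_2$, $\tau_1,\tau_2$ and $a,b,c,d$, and I expect this map to vanish identically precisely when one of $\delta_1,\delta_2$ coincides with an inner $\tau_i$-derivation $r\mapsto\tau_i(r)s-sr$. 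Since neither $\delta_i$ is $\tau_i$-inner, the obstruction is nonzero, the desired lower-degree element appears, and with the previous paragraph $Q$ is a principal ideal ring. The hard part is organizing this final cancellation -- controlling how the lower-order tails of $p$ and $p'$ interact under the iterated right multiplications and isolating exactly the inner/non-inner dichotomy; the surrounding filtration and division-algorithm steps (including the case analysis, by parity of $n$ and by whether the quadratic relations force extra degree drops, needed in the climb) are routine bookkeeping once the commutation and quadratic relations are carefully arranged.
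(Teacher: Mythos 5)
There are two genuine gaps here, and they are linked. First, your opening reduction changes the statement being proved. In this paper ``principal ideal ring'' is established for \emph{two-sided} ideals: the proof of Theorem \ref{division} takes a proper ideal $I$, and the single place where hypothesis (iii) enters is the element $fr-\tau^{(n)}(r)f$, which lies in $I$ only because $I$ is closed under \emph{both} left and right multiplication. You instead set out to show every \emph{right} ideal is principal. That is a strictly stronger claim which the paper does not make --- Theorem \ref{division 2} only achieves ``principal or doubly generated'' for one-sided ideals even over a division ring, and its Cases I and II are precisely one-sided ideals whose minimal-degree elements have pure leading terms, which are not contradictory there and cannot be ``ruled out'' in general. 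Your Case-1 elimination is therefore aimed at something that is most likely false for one-sided ideals, and in any event is not what the theorem asserts.

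Second, the step where hypothesis (iii) must do its work is not actually carried out: you write that you ``expect'' an obstruction map built from $\delta_1,\delta_2,\tau_1,\tau_2,a,b,c,d$ to vanish exactly when some $\delta_i$ is $\tau_i$-inner, but you give no construction, and the mechanism you propose (right multiplication by elements of $R$ and by $x,y$ only) cannot detect innerness. The condition $\delta(r)=\tau(r)s-sr$ intrinsically compares left and right multiplication by $r$; the paper extracts it by computing $fr-\tau^{(n)}(r)f$ for a minimal-degree $f=x^{(n)}+a_{n-1}x^{(n-1)}+b_{n-1}y^{(n-1)}+\cdots\in I$, observing this element of $I$ has degree $<n$ hence vanishes, and reading off from the $y^{(n-1)}$-coefficient that $\delta_1(r')=\tau_1(r')b_{n-1}-b_{n-1}r'$ for all $r'$, contradicting (iii). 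With pure leading terms excluded, every minimal-degree element of $I$ has both leading coefficients nonzero, a short computation shows they are all left proportional to a single $p$, and induction on degree (subtracting $a_m'a_n^{-1}px_{n,m}+b_m'b_n^{-1}py_{n,m}$, again using two-sidedness) gives $I=\langle p\rangle$. Your descent machinery in the ``one-dimensional $\mathcal L_n$'' case is broadly compatible with this endgame, but without the two-sided commutator step the proof of the theorem is missing its essential ingredient.
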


\begin{proof}
Let $n$ be the minimal degree among the nonzero elements of a nonzero proper ideal $I\subseteq Q$ and suppose $$f=x^{(n)}+a_{n-1}x^{(n-1)}+b_{n-1}y^{(n-1)}+\text{[lower degree terms]}\in I$$ for some $a_{n-1},b_{n-1}\in R$.
Then for all $r\in R$, we compute
\begin{align*}
fr-\tau^{ (n)}(r)f&=\Big(\tau^{ (n-1)}\delta_{[[n]]}(r)+a_{n-1}\tau^{ (n-1)}(r)-\tau^{ (n)}(r)a_{n-1}\Big)x^{(n-1)}\\
&\phantom{===}+\Big(\delta_1\tau_1^{-1}\tau^{(n)}(r)+b_{n-1}\tau_1^{-1}\tau^{ (n)}(r)-\tau^{ (n)}(r)b_{n-1}\Big)y^{ (n-1)}\\
&\phantom{===}+\text{[lower degree terms]}
\end{align*}
where $[[n]]=1$ (resp. $2$) when $n$ is odd (resp. even), and $\tau^{ (i)}=\underbrace{\tau_1\tau_2\tau_1\cdots}_{i\text{ terms}}$ for all $i\in\mathbb{N}$.

Since $\text{deg}(fr-\tau^{ (n)}(r)f)<n$ and $fr-\tau^{ (n)}(r)f\in I$, this implies the coefficients above are all zero. In particular, this will imply that $\delta_1$ is an inner $\tau_1$-derivation (because $\delta_1(r) = \tau_1(r)b_{n-1}-b_{n-1}r$ for all $r\in R$). Thus $f\notin I$.  Similarly, one can show that $I$ does not contain anything of the form $ y^{(n)}+\text{[lower degree terms]}.$

Let $I_n$ denote the set of elements of $I$ having degree $n$. From the previous arguments, it follows that every $g\in I_n$ has both leading coefficients nonzero. Hence, there exists $p=a_nx^{(n)}+b_ny^{(n)}+[\text{lower terms}]\in I$ with $a_n,b_n\in R$ both nonzero. If $h=c_nx^{(n)}+d_ny^{(n)}+[\text{lower terms}]\in I_n$ for some nonzero $c_n,d_n\in R$, then 
\begin{equation*}a_n^{-1}p-c_n^{-1}h=(a_n^{-1}b_n-c_n^{-1}d_n)y^{(n)}+[\text{lower terms}]\in I.\end{equation*}
Therefore $a_n^{-1}b_n-c_n^{-1}d_n=0$. Hence, we have
\begin{equation*}a_nc_n^{-1}h=a_nx^{(n)}+b_ny^{(n)}+[\text{lower terms}]\in I_n.\end{equation*}
It follows that $\text{deg}(p-a_nc_n^{-1}h)<n$ . Hence $p-a_nc_n^{-1}h=0$ and $I_n=(R\backslash\{0\})p$. 

We will show $I=\left<p\right>$.  Obviously, $\left<p\right>\subseteq I$.  Furthermore, if $p^\prime\in I$ and $\text{deg}(p^\prime)<n$, then $p^\prime=0$. Thus $p^\prime\in\left<p\right>$. If $\text{deg}(p^\prime)=n$, then $p^\prime\in Rp\subseteq \left<p\right>$. Now suppose $p^\prime = a_m^\prime x^{(m)}+b_m^\prime y^{(m)}+[\text{lower degree terms}]\in I$ with $m> n$ and assume everything in $I$ having degree less than $m$ is in $\left<p\right>$. Since 
\begin{equation*}\text{deg}(p^\prime-a_m^\prime a_n^{-1}px_{n,m}-b_m^\prime b_n^{-1}py_{n,m})<m,\end{equation*}
(recall that $x_{n,m},y_{n,m}\in Q$ are the unique alternating-letter words of length $m-n$ so that $x^{(n)}x_{n,m}=x^{(m)}$ and $y^{(n)}y_{n,m}=y^{(m)}$) this implies $p^\prime\in\left<p\right>$.
\end{proof}

\begin{theorem}\label{division 2} If $R$ is a division ring and $\tau_1$ and $\tau_2$ are automorphisms, then the one-sided ideals of $Q$ are either principal or doubly generated.
\end{theorem}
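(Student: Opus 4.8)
The plan is to adapt the proof of Theorem~\ref{division}, relaxing the non-innerness hypothesis and allowing a second generator to appear. I treat right ideals; left ideals are handled by the symmetric argument, writing elements of $Q$ with right-hand coefficients in the right $R$-basis $\{1,\widehat{x}^{(1)},\widehat{y}^{(1)},\dots\}$, exactly as in the proof of Theorem~\ref{Hilbert}. Let $I$ be a nonzero right ideal of $Q$. If $I\cap R\neq 0$, then $I$ contains a nonzero element of the division ring $R$, hence a unit, so $I=Q=\langle 1\rangle$ is principal; thus I may assume $I\cap R=0$, so that every nonzero element of $I$ has degree at least $1$. Let $n\geq 1$ be the least degree of a nonzero element of $I$, and let $I_n$ denote the set of elements of $I$ of degree $n$, together with $0$.

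The first step is a dichotomy for $I_n$. Writing elements with left-hand coefficients, the assignment that sends a degree-$n$ element $p=\Lambda x^{(n)}+\Lambda'y^{(n)}+[\text{lower degree}]$ to the pair $(\Lambda,\Lambda')$, and $0$ to $(0,0)$, is additive; since $\tau_1,\tau_2$ are automorphisms, the coefficients $\Lambda,\Lambda'$ transform invertibly under right multiplication by $R$, so this is a homomorphism of right $R$-modules from $I_n$ into a free right $R$-module of rank $2$. By minimality of $n$ its kernel is $0$, so $I_n$ is isomorphic to a right $R$-submodule of a $2$-dimensional right $R$-vector space; hence $\dim_R I_n\in\{1,2\}$, and it is nonzero since the minimum $n$ is attained. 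The second ingredient, used implicitly already in Theorem~\ref{division}, is that for $m\geq n$ one has $x^{(n)}x_{n,m}=x^{(m)}$ and $y^{(n)}y_{n,m}=y^{(m)}$ by definition, whereas $\text{deg}(y^{(n)}x_{n,m})<m$ and $\text{deg}(x^{(n)}y_{n,m})<m$, since $x^{(n)}$ and $y^{(n)}$ terminate in opposite letters, so a reduction ($x^2=ax+b$ or $y^2=cy+d$) occurs at the junction. It follows that if $p\in I$ has degree $n$ with leading term $\Lambda x^{(n)}+\Lambda'y^{(n)}$, then $px_{n,m}$ has leading term $\Lambda x^{(m)}$ and $py_{n,m}$ has leading term $\Lambda'y^{(m)}$, and right multiplication by a suitable element of $R$ rescales that leading coefficient to any prescribed value. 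This ``degree raising with a single, controlled leading coefficient'' drives the reductions below, each of which is a strong induction on degree whose base case $\text{deg}<n$ is furnished by $I\cap R=0$.

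Then comes the case analysis. If $\dim_R I_n=2$, the embedding above is onto, so I may choose $p_1,p_2\in I_n$ with leading terms exactly $x^{(n)}$ and $y^{(n)}$; given $p\in I$ of degree $m\geq n$ with leading term $ax^{(m)}+by^{(m)}$, subtracting appropriate right multiples of $p_1x_{n,m}$ and $p_2y_{n,m}$ drops the degree, so $I=\langle p_1,p_2\rangle$. If $\dim_R I_n=1$, write $I_n=pR$ with $p$ of leading term $\Lambda x^{(n)}+\Lambda'y^{(n)}$. If $\Lambda\neq 0$ and $\Lambda'\neq 0$, the same reduction using right multiples of $px_{n,m}$ and $py_{n,m}$ yields $I=\langle p\rangle$. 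If exactly one of $\Lambda,\Lambda'$ is zero---say $\Lambda=0\neq\Lambda'$, the other case being symmetric---rescale $p$ so that its leading term is $y^{(n)}$. Should $I$ contain no element with nonzero $x^{(\cdot)}$-leading coefficient, then every degree-$m$ element of $I$ has leading term a nonzero multiple of $y^{(m)}$, and subtracting right multiples of $py_{n,m}$ gives $I=\langle p\rangle$. Otherwise let $m^{*}>n$ be the least degree at which an element of $I$ has nonzero $x^{(\cdot)}$-leading coefficient; from such an element, subtracting a right multiple of $py_{n,m^{*}}$ produces $q\in I$ of degree $m^{*}$ with leading term $ax^{(m^{*})}$, $a\neq 0$. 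I claim $I=\langle p,q\rangle$: for $p''\in I$ of degree $m$, if $n\leq m<m^{*}$ then the $x^{(m)}$-coefficient of $p''$ vanishes and one reduces the degree using a right multiple of $py_{n,m}$ alone, while if $m\geq m^{*}$ one cancels the $x^{(m)}$-part with a right multiple of $qx_{m^{*},m}$ and the $y^{(m)}$-part with one of $py_{n,m}$. In every case $I$ is principal or doubly generated, and left ideals are handled identically by the symmetric setup.

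The step I expect to be the main obstacle is the case $\dim_R I_n=1$ with $\Lambda=0\neq\Lambda'$: one must recognize that it is the $x^{(\cdot)}$-leading direction that is missing from $I_n$, correctly identify the threshold degree $m^{*}$ at which that direction first becomes available in $I$, and then verify that $p$ and $q$ together generate $I$ at every degree---in particular throughout the intermediate band $n\leq m<m^{*}$, where only $p$ is usable and one must know a priori that the $x^{(m)}$-coefficient of any degree-$m$ element of $I$ vanishes. A subsidiary point demanding care is the bookkeeping of leading terms under the junction reductions (the inequalities $\text{deg}(y^{(n)}x_{n,m})<m$ and $\text{deg}(x^{(n)}y_{n,m})<m$, together with the correct twists of $R$ under which leading coefficients transform), since an oversight there would allow an unwanted top-degree term to survive and would break the induction.
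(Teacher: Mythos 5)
Your proposal is correct and follows essentially the same strategy as the paper's proof: induct on degree, reduce leading terms using the words $x_{n,m}$, $y_{n,m}$ (exploiting the junction reductions $\deg(y^{(n)}x_{n,m})<m$, $\deg(x^{(n)}y_{n,m})<m$), and split into cases according to which leading directions are available, your three cases matching the paper's Cases I--III up to reorganization by $\dim_R I_n$ and the left/right convention. The only substantive difference is that in the case $\dim_R I_n=1$ with both leading coefficients of $p$ nonzero you correctly observe that $px_{n,m}$ and $py_{n,m}$ already supply both leading directions, so $I$ is principal, whereas the paper's Case III introduces a second generator there; this is a harmless (indeed slightly sharper) variant.
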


\begin{proof}
We will prove the left ideals of $Q$ are either principal or doubly generated. Here, we use the fact that $Q$ is a free left $R$-module having basis $\{1,\widehat{x}^{(1)},\widehat{y}^{(1)},\widehat{x}^{(2)},\widehat{y}^{(2)},...\}$ and write all polynomials with left-hand coefficients. The proof for the right ideals is similar; write all polynomials with right-hand coefficients. Suppose $I$ is a nonzero left ideal of $Q$. Let $n$ be the minimal degree among the nonzero elements of $I$. Choose $p\in I$ having degree $n$. We consider three cases. In all cases we show there exists $p^\prime\in I$ so that $I=Qp+Qp^\prime$. Whenever $I$ is a principal left ideal, $p^\prime$ may be chosen to be in $Qp$. In this situation, we do not mention $p^\prime$. We let $f$ be an arbitrary element of $I$ having degree $m$. If $m<n$, then $f=0$ and clearly $f\in Qp+Qp^\prime$. Thus, we let $m\geq n$ and assume everything in $I$ of degree less than $m$ is in $Qp+Qp^\prime$. Throughout this proof, we will make use of elements $\widehat{x}_{n,m},\widehat{y}_{n,m}\in Q$ (for $m\geq n$), which are defined by the conditions $\widehat{x}_{n,m}\widehat{x}^{(n)}=\widehat{x}^{(m)}$ and $\widehat{y}_{n,m}\widehat{y}^{(n)}=\widehat{y}^{(m)}$.

\emph{Case I: Every nonzero element of $I$ has no leading $\widehat{x}^{(i)}$-coefficients ($\widehat{y}^{(i)}$- coefficients).} Since $R$ is a division ring, we may without any loss of generality assume the leading term of $p$ is $\widehat{y}^{(n)}$ (or $\widehat{x}^{(n)}$). Suppose the leading term of $f$ is $r\widehat{y}^{(m)}$ (or $r\widehat{x}^{(m)}$) for some nonzero $r\in R$. Hence $\text{deg}\big[f-r(\widehat{x}_{n,m}+\widehat{y}_{n,m})p\big]<m$. Therefore, $I=Qp$.

\emph{Case II: Every element in $I$ of degree $n$ has no leading $\widehat{x}^{(i)}$-coefficient ($\widehat{y}^{(i)}$- coefficient) and there exists $p^\prime\in I$ having a nonzero leading $\widehat{x}^{(i)}$-coefficient ($\widehat{y}^{(i)}$-coefficient).} We will prove this for the $\widehat{x}^{(i)}$-case. In other words, we assume that every polynomial in $I$ of degree $n$ has $0$ as its leading $\widehat{x}^{(i)}$-coefficient and there exists $p^\prime\in I$ having a nonzero leading $\widehat{x}^{(i)}$-coefficient. The proof for the $\widehat{y}^{(i)}$-case is similar. Choose $p^\prime\in I$ of minimal degree satisfying the aforementioned condition. Without loss of generality, suppose 
\begin{align*}
&p=\widehat{y}^{(n)}+[\text{lower degree terms}],\\
&p^\prime= \widehat{x}^{(l)}+r\widehat{y}^{(l)}+[\text{lower degree terms}],\\
&f=s\widehat{x}^{(m)}+t\widehat{y}^{(m)}+[\text{lower degree terms}]
\end{align*} 
for some $r,s,t\in R$. It readily follows that $f-t\widehat{y}_{n,m}p=s\widehat{x}^{(m)}+[\text{lower degree terms}]$. If $s=0$, then $\text{deg}\Big(f-t\widehat{y}_{n,m}p\Big)<m$ and this implies $f\in Qp+Qp^\prime$. If $s\neq 0$, then $m\geq l$ and 
$\text{deg}\Big(f-t\widehat{y}_{n,m}p-s\widehat{x}_{l,m}p^\prime\Big)<m.$ Thus $f\in Qp+Qp^\prime$.

\emph{Case III: $p$ may be chosen with both leading coefficients nonzero.} Suppose without loss of generality
\begin{equation*}
p=\widehat{x}^{(n)}+r\widehat{y}^{(n)}+[\text{lower degree terms}]
\end{equation*}
for some nonzero $r\in R$.  Let $p^\prime\in I$ be a nonzero polynomial of minimal degree having $0$ as its leading $\widehat{x}^{(i)}$-coefficient (such a $p^\prime$ exists because $\widehat{y}_{n,n+1}r^{-1}p=\widehat{y}^{(n+1)}+[\text{lower degree terms}]\in I$, for instance). Since $R$ is a division ring, we may choose $p^\prime$ to be of the form $p^\prime = \widehat{y}^{(l)}+[\text{lower degree terms}]$. Suppose $f =s\widehat{x}^{(m)}+t\widehat{y}^{(m)}+[\text{lower degree terms}]$ for some $s,t\in R$. The leading term of  $f-s\widehat{x}_{n,m}p$ is $ty^{(m)}$. If $t=0$, then $\text{deg}\Big(f-s\widehat{x}_{n,m}p\Big)<m$. Therefore $f\in Qp+Qp^\prime$. If $t\neq 0$, then $m\geq l$ and $\text{deg}\Big(f-s\widehat{x}_{n,m}p-t\widehat{y}_{l,m}p^\prime\Big)<m$. Thus $f\in Qp+Qp^\prime$.
\end{proof}

\section{The Double Affine Hecke Algebra of Type $GL_2$: An Example of an Amalgamated Product}\label{DAHA}

In this section we show that the double affine Hecke algebra $\mathbb{H}_{q,t}(GL_2(k))$ associated to the general linear group $GL_2(k)$ is an amalgamated product of quadratic extensions over a three dimensional quantum torus provided $\text{char}(k)\neq 2$. From the results of Section \ref{Quadratic Extensions} it follows that $\mathbb{H}_{q,t}(GL_2(k))$ is noetherian.

The presentation we use for $\mathbb{H}_{q,t}(GL_2(k))$ is taken from \cite[Section 1.4.3]{Ch-book}. The algebra $\mathbb{H}_{q,t}(GL_2(k))$ can be defined over any base field $k$. However, in the results that follow we need $\text{char}(k)\neq 2$ because $2$ appears in a denominator.  Let $q,t\in k$ be fixed nonzero scalars such that $t^{1/2}$ exists. The double affine Hecke algebra $\mathbb{H}_{q,t}(GL_2(k))$ is defined as the associative $k$-algebra generated by invertible elements $T$, $X_1$, $X_2$, $Y_1$, $Y_2$ and has the defining relations
\begin{align}
& X_1X_2=X_2X_1, &&Y_1Y_2=Y_2Y_1,\\
&(T-t^{1/2})(T+t^{-1/2})=0, &&Y_2^{-1}X_1Y_2X_1^{-1}=T^2,\\
&T^{-1}Y_1T^{-1}=Y_2, &&TX_1T = X_2,\\
&Y_1Y_2X_1=qX_1Y_1Y_2, &&Y_1Y_2X_2=qX_2Y_1Y_2,\\
&X_1X_2Y_1=q^{-1}Y_1X_1X_2, &&X_1X_2Y_2=q^{-1}Y_2X_1X_2.
\end{align}

Let $R$ denote the $k$-algebra generated by the variables $z_1^{\pm 1},z_2^{\pm 1},z_3^{\pm 1}$ and having the defining relations $z_1z_2=z_2z_1$, $z_1z_3=q^{-1}z_3z_1$, and $z_2z_3=q^{-1}z_3z_2$. Let $\tau_1$ be the $k$-algebra automorphism of $R$ that interchanges $z_1$ and $z_2$, and sends $z_3$ to itself. Finally, let $\delta_1$ be the $k$-linear left $\tau_1$-derivation of $R$ defined by $-\alpha\frac{z_1+z_2}{z_1-z_2}(1-\tau_1)$, where $\alpha = \frac{1}{2}\left(t^{1/2}-t^{-1/2}\right)\in k$. Put $Q_1=R[x;\tau_1,\delta_1]/I_1$, where $I_1\subseteq R[x;\tau_1,\delta_1]$ is the ideal generated by the normal element $x^2-\left(\frac{t^{1/2}+t^{-1/2}}{2}\right)^2$. Therefore $Q_1$ is a quadratic extension of $R$. When it is not confusing, we will let $x$ denote the equivalency class $x+I_1\in Q_1$. Next, let $Q_2=R[y;\tau_2,\delta_2]/I_2$, where $\tau_2$ is the automorphism of $R$ given by $z_1\mapsto z_2$, $z_2\mapsto q^{-1}z_1$, $z_3\mapsto z_3$, $\delta_2\equiv 0$, and $I_2\subseteq R[y;\tau_2,\delta_2]$ is the ideal generated by the normal element $y^2-z_3^{-1}$. Thus $Q_2$ is a quadratic extension of $R$. Let $y$ denote the equivalency class $y+I_2\in Q_2$. 

Let $Q=Q_1*_RQ_2$. We have the following

\begin{theorem}\label{DAHA as an amalgamated product}
(for $\text{char}(k)\neq 2$) There is a $k$-algebra isomorphism $\varphi:Q\to\mathbb{H}_{q,t}(GL_2(k))$ which sends the generators of $Q$ to the following:
\begin{align*}
&z_1\mapsto X_1, & &z_2\mapsto X_2, & &z_3\mapsto Y_1Y_2, & &x\mapsto T-\alpha, & &y\mapsto Y_1^{-1}T,
\end{align*}
where $\alpha=\frac{1}{2}\left(t^{1/2}-t^{-1/2}\right)$.
\end{theorem}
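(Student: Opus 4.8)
The plan is to construct $\varphi$ explicitly by invoking the universal property of the amalgamated product, then to produce the inverse map directly, so that the two composites are seen to be the identity on generators. Concretely, I would first check that the assignments $z_1\mapsto X_1$, $z_2\mapsto X_2$, $z_3\mapsto Y_1Y_2$, $x\mapsto T-\alpha$ extend to a well-defined $k$-algebra homomorphism $\psi_1\colon Q_1\to\mathbb{H}_{q,t}(GL_2(k))$. This requires verifying: (a) the images of $z_1,z_2,z_3$ satisfy the quantum-torus relations of $R$ and are invertible inside $\mathbb{H}$ (the invertibility of $X_1,X_2$ is a hypothesis, and $Y_1Y_2$ is a product of invertibles); (b) the image of $x$ skew-commutes with the images of $z_i$ according to $\tau_1$ and $\delta_1$, i.e. $(T-\alpha)X_1 = \tau_1(X_1)(T-\alpha)+\delta_1(X_1)$ and similarly for $z_2,z_3$; and (c) the image of $x$ squares to $\left(\tfrac{t^{1/2}+t^{-1/2}}{2}\right)^2$, so that the ideal $I_1$ is killed. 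Step (b) for $z_3$ is where $TX_1T=X_2$, $Y_1Y_2X_i = qX_iY_1Y_2$ and the quadratic relation on $T$ get combined; the key computation is that $T - \alpha$ conjugation on the $X$'s reproduces the transposition $\tau_1$ up to the derivation term $-\alpha\frac{z_1+z_2}{z_1-z_2}(1-\tau_1)$. One should rewrite $(T-t^{1/2})(T+t^{-1/2})=0$ as $T^2 = (t^{1/2}-t^{-1/2})T + 1 = 2\alpha T + 1$, so $(T-\alpha)^2 = T^2 - 2\alpha T + \alpha^2 = 1 + \alpha^2 = \left(\tfrac{t^{1/2}+t^{-1/2}}{2}\right)^2$, which immediately gives (c).

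Next I would do the analogous check for $Q_2$: the assignments $z_1\mapsto X_1$, $z_2\mapsto X_2$, $z_3\mapsto Y_1Y_2$, $y\mapsto Y_1^{-1}T$ extend to a homomorphism $\psi_2\colon Q_2\to\mathbb{H}_{q,t}(GL_2(k))$. Here $\delta_2=0$, so I must verify $(Y_1^{-1}T)X_i = \tau_2(X_i)(Y_1^{-1}T)$ for $i=1,2$ and $(Y_1^{-1}T)z_3 = z_3(Y_1^{-1}T)$, together with $(Y_1^{-1}T)^2 = (Y_1Y_2)^{-1} = z_3^{-1}$. For the last identity one uses $T^{-1}Y_1T^{-1}=Y_2$, equivalently $Y_1^{-1}T Y_1^{-1} T = Y_1^{-1}\cdot(TY_1^{-1}T) = Y_1^{-1}Y_2^{-1}\cdot$(something) — more carefully, $T^{-1}Y_1T^{-1}=Y_2 \Rightarrow Y_1 = TY_2T \Rightarrow Y_1^{-1} = T^{-1}Y_2^{-1}T^{-1}$, and then $(Y_1^{-1}T)^2 = Y_1^{-1}(TY_1^{-1}T)T^{-1}\cdot T$... so I would instead start from $Y_1^{-1}T = (T Y_2 T)^{-1} T = T^{-1}Y_2^{-1}T^{-1}T = T^{-1}Y_2^{-1}$, hence $(Y_1^{-1}T)^2 = T^{-1}Y_2^{-1}T^{-1}Y_2^{-1}$; applying the relation in the form $T^{-1}Y_2^{-1}T^{-1} = ?$ one recovers $Y_1$ up to inverse, eventually yielding $z_3^{-1} = (Y_1Y_2)^{-1}$. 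Since $\psi_1\beta_1 = \psi_2\beta_2$ (both send $z_i$ to the same elements of $\mathbb{H}$), the universal property of Definition \ref{def: amalgamated product} furnishes a unique $k$-algebra homomorphism $\varphi\colon Q\to\mathbb{H}_{q,t}(GL_2(k))$ with the stated effect on generators.

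For surjectivity, I would show that $X_1,X_2,Y_1,Y_2,T$ all lie in the image: $X_1=\varphi(z_1)$, $X_2=\varphi(z_2)$, $T=\varphi(x)+\alpha=\varphi(x+\alpha)$, $Y_1 = T(Y_1^{-1}T)^{-1}$ so $Y_1 = \varphi(x+\alpha)\varphi(y)^{-1} = \varphi\big((x+\alpha)y^{-1}\big)$ (using that $y$ is invertible in $Q_2$ since $y^2=z_3^{-1}$ is a unit), and then $Y_2 = Y_1^{-1}z_3$-image $= \varphi(y(x+\alpha)^{-1})\varphi(z_3)$. For injectivity I would build the inverse: define a homomorphism $\mathbb{H}_{q,t}(GL_2(k))\to Q$ on the generators by $X_1\mapsto z_1$, $X_2\mapsto z_2$, $T\mapsto x+\alpha$, $Y_1\mapsto (x+\alpha)y^{-1}$, $Y_2\mapsto y(x+\alpha)^{-1}z_3$ (the precise formulas for $Y_1,Y_2$ read off by inverting the relations above), check that all the defining relations of $\mathbb{H}$ are satisfied in $Q$, and verify the two composites are identities on generators. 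The main obstacle is the sheer bookkeeping in verifying the cross-relation $(T-\alpha)z_3$-image $= z_3$-image$\cdot(T-\alpha)$ and its companions — i.e. that the derivation $\delta_1 = -\alpha\frac{z_1+z_2}{z_1-z_2}(1-\tau_1)$ is exactly what conjugation by $T-\alpha$ produces on the $X_i$ — together with confirming each of the ten defining relations of $\mathbb{H}_{q,t}(GL_2(k))$ holds after substitution; these are finite but delicate computations with the quadratic relation $T^2 = 2\alpha T + 1$ repeatedly invoked to reduce $T$-degrees. Once these relation checks are in hand, the universal property and the explicit inverse make the isomorphism statement immediate, and noetherianity of $\mathbb{H}_{q,t}(GL_2(k))$ follows from Theorem \ref{Hilbert} since $R = \mathcal{O}_{\bf q}((k^\times)^3)$ is noetherian and $\tau_1,\tau_2$ are automorphisms.
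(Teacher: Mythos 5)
Your proposal is correct and follows essentially the same route as the paper: construct $\varphi$ via the universal property by checking the skew-commutation and quadratic relations in $\mathbb{H}_{q,t}(GL_2(k))$ (using $T^2=2\alpha T+1$, so $(T-\alpha)^2=\left(\tfrac{t^{1/2}+t^{-1/2}}{2}\right)^2$ and $(Y_1^{-1}T)^2=Y_1^{-1}T\cdot T^{-1}Y_2^{-1}=(Y_1Y_2)^{-1}$), and then exhibit an explicit two-sided inverse. Your inverse formulas $Y_1\mapsto (x+\alpha)y^{-1}$ and $Y_2\mapsto y(x+\alpha)^{-1}z_3$ coincide with the paper's $Y_1\mapsto z_3(x+\alpha)y$ and $Y_2\mapsto z_3y(x-\alpha)$, since $y^{-1}=z_3y$ (because $y^2=z_3^{-1}$ and $y$ commutes with $z_3$) and $(x+\alpha)^{-1}=x-\alpha$ (because $x^2-\alpha^2=1$).
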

\begin{proof}
It is straightforward to check that the map $\varphi$ above defines an algebra homomorphism. To show $\varphi$ is an isomorphism, we note that there is an algebra homomorphism $\widetilde{\varphi}:\mathbb{H}_{q,t}(GL_2(k))\to Q$ given by $X_1 \mapsto z_1$,  $X_2 \mapsto z_2$,  $Y_1 \mapsto  z_3\left(x+\alpha\right)y$,  $Y_2 \mapsto z_3y\left(x-\alpha\right)$, and $T \mapsto x+\alpha$. Finally, one can verify that $\varphi\widetilde{\varphi}=\text{Id}_{\mathbb{H}_{q,t}(GL_2(k))}$ and $\widetilde{\varphi}\varphi=\text{Id}_Q$.
\end{proof}

Therefore, we have the following

\begin{theorem} (for $\text{char}(k)\neq 2$) The double affine Hecke algebra $\mathbb{H}_{q,t}(GL_2(k))$ is a noetherian ring.\end{theorem}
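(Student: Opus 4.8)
The plan is to deduce this immediately from the structural results already established, since the hard work was done in Sections \ref{Quadratic Extensions} and \ref{DAHA}. By Theorem \ref{DAHA as an amalgamated product} we have a $k$-algebra isomorphism $\mathbb{H}_{q,t}(GL_2(k))\cong Q=Q_1*_RQ_2$, where $R={\mathcal O}_{\bf q}((k^\times)^3)$ and $Q_1$, $Q_2$ are the quadratic extensions $R[x;\tau_1,\delta_1]/I_1$ and $R[y;\tau_2,\delta_2]/I_2$ described there. So it suffices to check that the hypotheses of Theorem \ref{Hilbert} are met: namely, that $R$ is (two-sided) noetherian and that $\tau_1$ and $\tau_2$ are automorphisms of $R$.

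First I would note that $\tau_1$ and $\tau_2$ are automorphisms by construction: each was defined as a $k$-algebra automorphism of $R$, and one checks directly that the prescribed assignments on $z_1^{\pm1},z_2^{\pm1},z_3^{\pm1}$ respect the defining relations $z_1z_2=z_2z_1$, $z_1z_3=q^{-1}z_3z_1$, $z_2z_3=q^{-1}z_3z_2$ (for $\tau_2$, for instance, $\tau_2(z_1)\tau_2(z_3)=z_2z_3=q^{-1}z_3z_2=q^{-1}\tau_2(z_3)\tau_2(z_1)$, and similarly for the other relations), and that the evident inverse assignments give a two-sided inverse. Second, $R$ is noetherian: the quantum torus ${\mathcal O}_{\bf q}((k^\times)^3)$ is an iterated skew-Laurent extension of the field $k$ — concretely $R\cong k[z_1^{\pm1}][z_2^{\pm1}][z_3^{\pm1};\sigma]$ where $\sigma$ is the automorphism sending $z_1\mapsto q z_1$, $z_2\mapsto q z_2$ — and skew-Laurent extensions of noetherian rings by automorphisms are noetherian, so $R$ is noetherian by induction (this is standard; see e.g. \cite{Goodearl}).

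With both hypotheses verified, Theorem \ref{Hilbert} applies and shows $Q$ is (right and left) noetherian; transporting along the isomorphism $\varphi$ of Theorem \ref{DAHA as an amalgamated product} gives that $\mathbb{H}_{q,t}(GL_2(k))$ is noetherian. I do not anticipate a genuine obstacle here — the only thing requiring a moment's care is invoking the known noetherianity of the three-dimensional quantum torus, which is entirely routine, and confirming that the quadratic-extension data $(a,b,\tau_1)=\bigl(0,\bigl(\tfrac{t^{1/2}+t^{-1/2}}{2}\bigr)^2,\tau_1\bigr)$ and $(c,d,\tau_2)=(0,z_3^{-1},\tau_2)$ indeed satisfy the compatibility conditions (\ref{s1})--(\ref{s2}), which is already implicit in the assertion that $x^2-\bigl(\tfrac{t^{1/2}+t^{-1/2}}{2}\bigr)^2$ and $y^2-z_3^{-1}$ are normal elements.
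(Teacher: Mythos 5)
Your proposal is correct and follows exactly the paper's own route: the paper's proof simply notes that the quantum torus $R$ is noetherian and then cites Theorems \ref{Hilbert} and \ref{DAHA as an amalgamated product}. The extra details you supply (the iterated skew-Laurent description of $R$ and the verification that $\tau_1,\tau_2$ are automorphisms) are accurate and merely fill in steps the paper treats as standard.
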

\begin{proof} The quantum torus $R$ is a noetherian ring. By Theorems \ref{Hilbert} and \ref{DAHA as an amalgamated product}, $\mathbb{H}_{q,t}(GL_2(k))$ is noetherian also.
\end{proof}

\end{document}